\newenvironment{customlem}[1]
  {\innercustomlem}
  {\endinnercustomlem}
\long\def\symbolfootnote[#1]#2{\begingroup%
\def\thefootnote{\fnsymbol{footnote}}\footnote[#1]{#2}\endgroup}
\titleformat{\section}{\large\bfseries}{\thesection.}{.5em}{}
\titlespacing*{\section}{0pt}{*3}{*2}
\titleformat{\subsection}{\normalfont\bfseries}{\thesubsection.}{.5em}{}
\titlespacing*{\subsection} {0pt}{*3}{*2}
\titleformat{\subsubsection}{\normalfont\bfseries}{\thesubsubsection.}{.5em}{}
\titlespacing*{\subsubsection} {0pt}{*3}{*2}
\theoremstyle{plain} 
\newtheorem{theorem}{Theorem}[section]
\newtheorem{lemma}{Lemma}[section]
\newtheorem{corollary}{Corollary}[section]
\theoremstyle{definition} 
\newtheorem{definition}{Definition}[section]
\newtheorem{remark}{Remark}[section]
\newtheorem{example}{Example}[section]
\numberwithin{equation}{section} 
\newcommand{\yuchen}[1]{{\color{blue}{#1}}}
\newcommand{\liyan}[1]{{\color{violet}{#1}}}
\newcommand{\brc}[1]{\left( #1 \right)}
\newcommand{\sbrc}[1]{\left[ #1 \right]}
\newcommand{\cbrc}[1]{\left\{ #1 \right\}}
\newcommand{\CADD}{{\mathsf{CADD}}}
\newcommand{\WADD}{{\mathsf{WADD}}}
\providecommand{\cF}{\mathcal{F}}
\providecommand{\cX}{\mathcal{X}}
\providecommand{\cC}{\mathcal{C}}
\providecommand{\hatP}{\widehat{P}}
\providecommand{\scrP}{\mathscr{P}}
\providecommand{\scrX}{\mathscr{X}}
\providecommand{\scrY}{\mathscr{Y}}
\providecommand{\calP}{\mathcal{P}}
\providecommand{\calQ}{\mathcal{Q}}
\providecommand{\wass}{\mathsf{W}}
\providecommand{\ind}{\mathbb{I}}
\newcommand{\norm}[1]{\left\lVert #1 \right\rVert}
\DeclareMathOperator*{\esssup}{ess\,sup}
\DeclareMathOperator*{\argmax}{arg\,max}
\DeclareMathOperator*{\argmin}{arg\,min}
\DeclareMathOperator*{\erf}{erf}
\def\bE{{\mathbb{E}}}
\def\bN{{\mathbb{N}}}
\begin{document}

\title{\textbf{\Large Distributionally Robust Quickest Change Detection using \\ Wasserstein Uncertainty Sets}}

\date{}
\maketitle

\author{
\begin{center}
\vskip -1cm

\textbf{\large Liyan Xie$^{\mathrm{a}*}$, Yuchen Liang$^{\mathrm{b}^*}$, Venugopal V. Veeravalli$^\mathrm{c}$} 

$^\mathrm{a}$School of Data Science, The Chinese University of Hong Kong, Shenzhen \\

$^\mathrm{b}$Department of Electrical and Computer Engineering, The Ohio State University \\

$^\mathrm{c}$Department of Electrical and Computer Engineering, University of Illinois at Urbana-Champaign
\end{center}
}

\symbolfootnote[0]{\normalsize Address correspondence to Venugopal V. Veeravalli,
Department of Electrical and Computer Engineering, University of Illinois at Urbana-Champaign, Urbana, IL, 61801, USA; e-mail: vvv@illinois.edu.}
\symbolfootnote[0]{\normalsize $^*$Liyan Xie and Yuchen Liang have contributed equally to this work.}

{\small \noindent\textbf{Abstract:} 
The problem of quickest detection of a change in the
distribution of a sequence of independent observations is considered. It is assumed that the pre-change distribution is known (accurately estimated), while the only information about the post-change distribution is through a (small) set of labeled data. This post-change data is used in a data-driven minimax robust framework, where an uncertainty set for the post-change distribution is constructed using the Wasserstein distance from the empirical distribution of the data. The robust change detection problem is studied in an asymptotic setting where the mean time to false alarm goes to infinity, for which the least favorable post-change distribution within the uncertainty set is the one that minimizes the Kullback-Leibler divergence between the post- and the pre-change distributions. It is shown that the density corresponding to the least favorable distribution is an exponentially tilted version of the pre-change density and can be calculated efficiently. 
A Cumulative Sum (CuSum) test based on the least favorable distribution, which is referred to as the distributionally robust (DR) CuSum test, is then shown to be asymptotically robust. The results are extended to the case where the post-change uncertainty set is a finite union of multiple Wasserstein uncertainty sets, corresponding to multiple post-change scenarios, each with its own labeled data. The proposed method is validated using synthetic and real data examples.
}
\\ \\
{\small \noindent\textbf{Keywords:} Sequential change detection; Data-driven methods; Minimax robust detection; Wasserstein distance.}
\\ \\
{\small \noindent\textbf{Subject Classifications:} Primary 62L10; Secondary 62G35.}

\section{Introduction}\label{sec:intro}

Given sequential observations, the problem of quickest change detection (QCD) is to detect a potential change in their distribution that occurs at some change-point as quickly as possible, while not making too many false alarms \citep{siegmund1985sequential,basseville1993detection,poor2008quickest,tartakovsky2014sequential}. The QCD problem is of fundamental importance in mathematical statistics, and has seen a wide range of applications (see, e.g., \cite{veeravalli2013quickest, xie2021sequential} for overviews). 

In the classical formulation of the QCD problem \citep{page-biometrica-1954}, it is assumed that the observations are independent and identically distributed (i.i.d.) with known pre- and post-change distributions. In many applications of QCD, while it is reasonable to assume that the pre-change distribution is known (can be estimated accurately), the post-change distribution is rarely completely known. However, we may have access to a limited set of data corresponding to one or more possible post-change scenarios. For example, in the application of activity monitoring, historical data collected through wearable sensors can help provide prior information on people's activities and behaviors \citep{mukhopadhyay2014wearable}.

There has been a large body of work on the QCD problem when the pre- and/or post-change distributions have parametric uncertainty. The most prevalent approach to dealing with parametric uncertainty is the generalized likelihood ratio (GLR) approach, introduced in \cite{lorden1971} for the special case where the pre-change distribution is known and the post-change distribution has an unknown parameter.
The GLR approach for the QCD problem with general parametric distributions is studied in detail in \cite{lai-ieeetit-1998} and \cite{lai2010sequential}. An alternative approach to dealing with parametric uncertainty is the mixture-based approach, which was proposed and studied in \cite{pollakmixture}.

The QCD problem has also been studied in the non-parametric setting.
To detect a distributional change in this setting, one approach has been to replace the classical likelihood ratio with some other statistics and formulate the test. In \cite{xie2015mstat}, a test is proposed that compares the kernel maximum mean discrepancy (MMD) within a window to a given threshold. A way to set the threshold is also proposed that meets the false alarm rate asymptotically \citep{xie2015mstat}. Another approach has been to estimate the log-likelihood ratio through a pre-collected training set. This includes direct kernel estimation \citep{sugiyama2012direct} and neural network estimation \citep{moustakides2019training}. 
More recently, a non-parametric GLR test based on density estimation has been developed for the case where the post-change distribution is completely unknown without any pre-collected post-change training samples \citep{liang2023quickest}.

Another line of work for dealing with non-parametric distributional uncertainty is the one based on \textit{minimax} robust detection, in which it is assumed that the pre- and/or post-change distributions come from mutually exclusive uncertainty classes. This approach is of particular interest when distributional robustness is one of the objectives of the QCD formulation. Under certain conditions on the uncertainty classes, e.g., joint stochastic boundedness \citep{moulin-veeravalli-2018}, low-complexity solutions to the minimax robust QCD problem can be found \citep{Unnikrishnan2011}. Under more general conditions, in particular, weak stochastic boundedness, a solution that is asymptotically close to the minimax robust solution can be found, as the mean time to false alarm goes to infinity \citep{Molloy2017}. 

In the literature of robust hypothesis testing, a variety of uncertainty sets have been considered to address distributional uncertainties in a non-parametric way. One line of work is where the uncertainty set is constructed by selecting a nominal distribution as the center and choosing a deviation measure such that the set includes all distributions whose deviation from the nominal does not exceed a positive constant. Examples include the $\epsilon$-contamination model \citep{huber1965} and the KL-divergence uncertainty sets \citep{levy2008principles}. 
In the data-driven setting, the nominal distribution at the center is often chosen to be the empirical distribution of the observed samples. It should be pointed out that the KL divergence deviation measure may not be applicable when using the empirical distribution as the center of the set, since the KL divergence set will only include distributions that are absolutely continuous with respect to the empirical distribution. Other data-driven uncertainty sets that have been used for robust binary hypothesis testing problems include the Wasserstein uncertainty sets \citep{gao2018robust}, the kernel MMD uncertainty sets \citep{sun2021data}, and the Sinkhorn uncertainty sets \citep{wang2022data}. Another line of work is where the uncertainty set is constructed according to pre-specified constraints, such as moment constraints \citep{magesh2023robust}. 

In this paper, we consider a \emph{data-driven} minimax robust QCD problem, where the pre-change distribution is assumed to be {\it known}, and the only knowledge about the post-change distribution is through a limited set of data corresponding to one or more possible post-change scenarios.
For each possible post-change scenario, we define an empirical distribution that corresponds to the data collected under this scenario, and we construct the corresponding Wasserstein uncertainty set to contain all distributions such that their Wasserstein distance from the empirical distribution does not exceed some specified value (i.e., radius). We define the overall post-change uncertainty set in our minimax robust QCD formulation to be the union of the Wasserstein uncertainty sets across all possible post-change scenarios. Our goal is to find the asymptotically optimal robust detection procedure that minimizes the worst-case detection delay over the uncertainty set under false alarm constraints. We focus on the asymptotic setting where the mean time to false alarm goes to infinity.


Our contributions are summarized as follows.
\begin{enumerate}
    \item For the case where there is only one post-change scenario, we characterize the least favorable distribution (LFD) within the Wasserstein uncertainty set in closed-form. In particular, we show that the density corresponding to the LFD is an exponentially tilted form of the pre-change density. We therefore establish that the Cumulative Sum (CuSum) test based on the LFD, which we refer to as the distributionally robust (DR) CuSum test, is asymptotically minimax robust. We also characterize the minimum sufficient radius through concentration inequalities of empirical measures in Wasserstein distance.
    \item We extend the DR-CuSum test to construct an asymptotically robust solution for the case where the post-change uncertainty set is a union of multiple Wasserstein uncertainty sets, corresponding to multiple post-change scenarios, each with its own labeled data.
    \item  For an example with Gaussian observations, we compare the performance of the DR-CuSum test with that of the CuSum test that has knowledge of the Gaussian post-change model and uses the labeled data to produce a maximum likelihood estimate (MLE) of the post-change mean and variance. We show that with an appropriately chosen radius, the DR-CuSum test, which makes no distributional assumptions about the post-change observations, outperforms the Gaussian MLE CuSum test. We also demonstrate the performance of the DR-CuSum test using a real human activity detection dataset.
\end{enumerate}

The rest of the paper is organized as follows. 
In Section\,\ref{sec:formulation}, we define the asymptotically minimax robust QCD problem. In Section\,\ref{sec:single_pcs}, we introduce the Wasserstein uncertainty set and propose the DR-CuSum test for the case of a single post-change scenario. In Section\,\ref{sec:radius_select}, we discuss the choice of the radius of the uncertainty set and the corresponding guarantees of the resulting robust detectors. In Section\,\ref{sec:multiple_pcs}, we extend our results to the more general case with multiple post-change scenarios. In Section\,\ref{sec:num_res}, we provide some numerical examples using both synthetic data and a human activity detection dataset. Finally, in Section\,\ref{sec:conclusion}, we provide some concluding remarks and point to future research directions.

\section{Problem Setup} \label{sec:formulation}

Let $\{X_k,\ k\in \bN \}$ be a sequence of independent random vectors whose values are observed sequentially, with $\scrX$ denoting the observation space, i.e., $X_k \in \scrX$ for all $k\in \bN$. Let $\{\cF_k,\ k\in \bN \}$ be the filtration,
\[
\cF_k = \sigma(X_1, \ldots, X_k),
\]
with $\cF_0$ denoting the trivial sigma algebra. Let $P$ and $Q$ be probability measures on $\scrX$. 
At some unknown (yet deterministic) time $\nu$, the data-generating distribution changes from $Q$ to $P$,  i.e., 
\begin{equation}
\begin{array}{ll}
X_k  \stackrel{\text{iid}}{\sim} Q, &k = 1,2,\ldots,\nu-1, \\
X_k \stackrel{\text{iid}}{\sim} P, &k = \nu,\nu+1,\ldots
\end{array} 
\label{eq:hypothesis}
\end{equation}
We assume that the pre-change measure $Q$ is {\it known}, while only partial knowledge of the post-change measure $P$ is available through a set of labeled (training) data.

Let $\mathbb P_\nu^{P}$ denote the probability measure on the observation sequence when the change-point is $\nu$, and the pre- and post-change measures are $Q$ and $P$, respectively, and let $\mathbb E_\nu^{P}$ denote the corresponding expectation. Also, $\mathbb P_\infty$ and $\mathbb E_\infty$ denote the probability and expectation operator when there is no change (i.e., $\nu=\infty$). For brevity, we write $\mathbb P^P$ and $\mathbb E^P$ as the probability and expectation operator when all the samples are generated from $P$ (i.e., when $\nu=1$).

The goal in QCD is to raise an alarm after the unknown change-point $\nu$ as quickly as possible, while keeping the false alarm rate below a pre-specified level. The detection is performed through a {\em stopping time} $\tau$ on the observation sequence at which the change is declared \citep{xie2021sequential}. 


\subsection{QCD Optimization Problem and CuSum Test}

\noindent \textsl{False Alarm Measure.} We measure the false alarm performance of a QCD test (stopping time) $\tau$ in terms of its mean time to false alarm $\bE_\infty\left[\tau\right]$, and we denote by $\cC(\gamma)$  the set of all tests for which the mean time to false alarm is at least $\gamma$, i.e., 
\begin{align}\label{eq:Cgamma_def}
\cC(\gamma)= \left\{ \tau: \; \bE_\infty\left[\tau\right] \geq \gamma \right\}.
\end{align}

\noindent \textsl{Delay Measure.} We use the commonly used worst-case measure for delay due to Lorden \citep{lorden1971}. Specifically, for post-change distribution $P$ and test $\tau$, we set\footnote{Alternatively we may also consider the Pollak's measure \citep{poll-astat-1985} $\CADD^P(\tau) = \operatorname{\sup}_{{\nu \geq 1}}\ \mathbb E^P_\nu [\tau-\nu| \tau\geq \nu]$.}
\begin{equation}\label{eq:WADDdef}
\WADD^P(\tau) =  \underset{\nu \geq 1}{\operatorname{\sup}} \esssup \ \mathbb E^P_\nu\left[(\tau-\nu+1)^+| \mathcal F_{\nu-1}\right].
\end{equation}


\noindent \textsl{QCD Optimization Problem.} When both $Q$
and $P$ are known \emph{a priori}, the optimization problem of interest is
\begin{equation}\label{eq:orig_qcd}
\inf_{\tau \in \cC(\gamma)} \WADD^{P}(\tau).
\end{equation}
Lorden showed that Page's Cumulative Sum (CuSum) test \citep{page-biometrica-1954} solves the problem in \eqref{eq:orig_qcd} asymptotically; this result was strengthened to exact optimality in \cite{moustakides1986optimal}. The stopping time of the CuSum test is given by
\begin{align}\label{eq:cusum_test}
\tau_b = \inf\left\{k \in \bN:  S_k  \geq b\right\},
\end{align}
with the CuSum statistic given by the recursion (for $k \geq 1$):
\begin{align}
\label{eq:cusum_stat}
    S_k = \max_{1 \leq j \leq k} \sum_{\ell=j}^k \log \frac{p(X_\ell)}{q(X_\ell)} = \left(S_{k-1}\right)^{+} + \log \frac{p(X_k)}{q(X_k)}, \quad S_0 = 0,
\end{align}
 and $b$ is chosen to meet the false alarm constraint of $\gamma$.  Here $p$, $q$ are the respective probability density functions (pdfs) of the measures $P$ and $Q$ with respect to some common dominating measure.

\subsection{Asymptotically Minimax Robust QCD}
As mentioned previously, we have limited knowledge about the post-change distribution $P$. One way to deal with this distributional uncertainty is to assume that  $P \in \calP$, where $\calP$ is a family of probability measures representing potential post-change distributions.
In the minimax robust QCD formulation, the goal is to solve the following optimization problem,
\begin{equation}\label{eq:Lorden}
\inf_{\tau \in C(\gamma)} \sup_{P\in\calP}\WADD^{P}(\tau),
\end{equation}
where $C(\gamma)$ is as defined in \eqref{eq:Cgamma_def}. 
As is standard practice in the analysis of QCD procedures, we are primarily interested in the asymptotically optimal solution to \eqref{eq:Lorden} as $\gamma \to \infty$. 
A solution $\tau^*\in C(\gamma)$ is called {\it first-order asymptotically minimax robust} for \eqref{eq:Lorden} if 
\[  \sup_{P\in\calP}\WADD^{P}(\tau^*) = \inf_{\tau\in C(\gamma)} \sup_{P\in\calP}\WADD^{P}(\tau) \cdot (1+o(1)),\]
where, as throughout this paper, $o(1) \to 0$ as $\gamma \to \infty$ \citep{Molloy2017}.

Determining the asymptotically minimax robust solution to robust QCD problems is facilitated by the following weak stochastic boundedness (WSB) condition on the uncertainty classes in the pre- and post-change regimes.

\begin{definition}[Weak Stochastic Boundedness \citep{Molloy2017}]\label{def:weak}
Let $\calP_0$ and $\calP_1$ be sets of distributions on a common measurable space where $\calP_0 \cap \calP_1 = \emptyset$. The pair $(\calP_0,\calP_1)$ is said to be weakly stochastically bounded by the pair of distributions $(P_0^\ast, P_1^\ast)$ if
\begin{equation*}\label{eq:cond1}
  \mathsf{KL}(P_1^\ast ||P_0^\ast) \leq \mathsf{KL}(P_1 ||P_0^\ast) - \mathsf{KL}(P_1 || P_1^\ast), \ \forall P_1 \in \calP_1,  
\end{equation*}
and
\begin{equation*}\label{eq:cond2}
\mathbb E^{P_0}\left[\frac{dP_1^\ast}{dP_0^\ast}(X)\right]\leq \mathbb E^{P_0^\ast}\left[\frac{dP_1^\ast}{dP_0^\ast}(X)\right]=1,\ \forall P_0 \in \calP_0,
\end{equation*}
where
\[
\mathsf{KL}(P||Q) = \int_\cX  \log \bigg(\frac{dP}{dQ}(x)\bigg)dP(x),
\]
and $\frac{dP}{dQ}(x)$ is the Radon-Nikodym derivative of $P$ with respect to $Q$ with $\frac{dP}{dQ}(x)=\infty$ when the derivative does not exist. The pair of distributions $(P_0^\ast, P_1^\ast)$ are called least favorable distributions (LFDs).
\end{definition}

Intuitively, the LFDs can be viewed as a representative pair of distributions within the uncertainty sets on which the stopping time reaches the worst-case performance. In this paper, we assume that the pre-change distribution is known, and thus there is no pre-change uncertainty. This corresponds to the special case that $\calP_0$ is a singleton. The following lemma follows directly from \cite[Prop. 1 (iii)]{Molloy2017}.

\begin{lemma}\label{lemma:wsb}
For the singleton set $\calQ = \{Q\}$ and a \emph{convex} set of distributions $\calP$ where $Q \notin \calP$, $(\calQ,\calP)$ is weakly stochastically bounded by the pair of distributions $(Q, P^\ast)$, where
\begin{equation}\label{eq:kl_min_general}
P^\ast = \arg \min_{P \in \calP} \mathsf{KL}(P||Q).
\end{equation}
\end{lemma}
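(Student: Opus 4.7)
The plan is to verify directly the two defining conditions of Definition 2.1 with $\calP_0 = \calQ = \{Q\}$, $\calP_1 = \calP$, $P_0^\ast = Q$, and $P_1^\ast = P^\ast$, where $P^\ast$ is the minimizer in \eqref{eq:kl_min_general}. The paper already hints that everything will reduce to a general statement in \cite{Molloy2017}, so I would organize the proof so that the only nontrivial ingredient is an information-projection (Pythagorean) inequality for KL divergence, and then cite or sketch that ingredient.

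First I would dispatch the second WSB condition. Because $\calQ$ is a singleton, the only admissible $P_0$ is $Q$ itself, so both the left- and right-hand sides of the inequality reduce to $\mathbb{E}^{Q}\!\left[\tfrac{dP^\ast}{dQ}(X)\right] = \int dP^\ast = 1$, which holds because $P^\ast$ is a probability measure. No further argument is needed here, aside from (implicitly) assuming $P^\ast \ll Q$, which is forced by the finiteness of $\mathsf{KL}(P^\ast \| Q)$ at the minimum (else the min would be $+\infty$, contradicting the existence of a meaningful detection problem; if no $P \in \calP$ has finite KL to $Q$, the statement is vacuous).

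Next I would verify the first WSB condition, namely
\[
\mathsf{KL}(P^\ast \| Q) \le \mathsf{KL}(P \| Q) - \mathsf{KL}(P \| P^\ast), \qquad \forall P \in \calP.
\]
This is the information-projection (Csisz\'ar Pythagorean) inequality for a reverse I-projection onto a convex set, and it is exactly the content of \cite[Prop.~1 (iii)]{Molloy2017} in this singleton specialization. To make the derivation self-contained I would proceed by the standard variational argument: fix an arbitrary $P \in \calP$ with $\mathsf{KL}(P\|Q) < \infty$, and for $\alpha \in (0,1)$ form the mixture $P_\alpha = (1-\alpha)P^\ast + \alpha P$, which lies in $\calP$ by convexity. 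Since $P^\ast$ minimizes $\mathsf{KL}(\cdot \| Q)$ over $\calP$, the function $\alpha \mapsto \mathsf{KL}(P_\alpha \| Q)$ attains its minimum at $\alpha = 0$, so its right derivative at $0$ is nonnegative. Computing this derivative by dominated convergence and using $\log(1+x) \le x$ yields
\[
\int \log\!\frac{dP^\ast}{dQ}\, d(P - P^\ast) \ge 0,
\]
which upon splitting $\log\frac{dP^\ast}{dQ} = \log\frac{dP}{dQ} - \log\frac{dP}{dP^\ast}$ and integrating against $P$ rearranges exactly to the desired inequality.

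The only genuine obstacle is the differentiation-under-the-integral step, which requires that the relevant log-likelihoods be integrable uniformly in a neighborhood of $\alpha = 0$; this is standard and is handled in the reference by restricting attention to $P \in \calP$ with finite KL to $Q$ (distributions with infinite KL satisfy the inequality trivially). Once both conditions are checked, Definition 2.1 applies and the weak stochastic boundedness of $(\{Q\}, \calP)$ by $(Q, P^\ast)$ follows, completing the proof.
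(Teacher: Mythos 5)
Your proof is correct. The paper gives no argument for this lemma beyond the single sentence that it ``follows directly from \cite[Prop.~1~(iii)]{Molloy2017}'', so at the level of strategy you and the authors do the same thing: reduce the claim to the two conditions of Definition~\ref{def:weak}, note that the second condition is vacuous when the pre-change class is the singleton $\{Q\}$ (both sides equal $\int dP^\ast = 1$), and identify the first condition with the Pythagorean (information-projection) inequality for the minimizer of $\mathsf{KL}(\cdot\|Q)$ over a convex set. What you add beyond the paper is a self-contained derivation of that inequality via the mixture perturbation $P_\alpha=(1-\alpha)P^\ast+\alpha P$ and the first-order optimality condition $\int \log\frac{dP^\ast}{dQ}\,d(P-P^\ast)\ge 0$, which buys the reader a proof that does not require chasing the reference; the cost is that you must justify differentiation under the integral and implicitly assume $P\ll P^\ast$ when splitting $\log\frac{dP^\ast}{dQ}=\log\frac{dP}{dQ}-\log\frac{dP}{dP^\ast}$, both of which you correctly flag as the only delicate points (and which are handled in Csisz\'ar's classical treatment exactly as you describe, by restricting to $P$ with $\mathsf{KL}(P\|Q)<\infty$). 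No gap; your write-up is strictly more detailed than the paper's.
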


Let $p^*,q$ be pdfs of $P^*$ and $Q$, respectively,  with respect to a common dominating measure. 
Then applying \citep[Theorem 3]{Molloy2017}, we conclude that the first-order asymptotically minimax robust solution to \eqref{eq:Lorden}, as $\gamma \to \infty$, is given by the CuSum test with pre-change pdf $q$ and post-change pdf $p^\ast$, i.e.,
\begin{align}\label{eq:cusum_test_opt}
\tau_{\mathrm{DR}}
= \inf\left\{k \in \bN:  S_k  \geq b\right\},
\end{align}
with $S_k$ satisfying the recursion (for $k\geq 1$):
\begin{align}
\label{eq:cusum_stat_opt}
    S_k = \left(S_{k-1}\right)^{+} + \log \frac{p^\ast (X_k)}{q(X_k)}, \quad S_0= 0,
\end{align}
 and $b$ chosen to meet the false alarm constraint of $\gamma$.

\section{Asymptotically Minimax Robust QCD under Wasserstein Uncertainty Sets: Single Post-Change Uncertainty Set} 
\label{sec:single_pcs}


As described in Section~\ref{sec:intro}, we are interested in a \emph{data-driven} version of the minimax robust QCD problem, where the only knowledge about the post-change distribution is through a limited set of labeled data corresponding to one or more potential post-change scenarios. We use this data to construct an uncertainty set for the post-change distribution. We begin by considering the simplest case where there is only {\it one} possible post-change scenario. We define a \emph{Wasserstein} uncertainty set as the set of all distributions whose Wasserstein distance from the empirical distribution corresponding to the data from the post-change scenario is less than some specified value, called the \emph{radius}. 
Our main finding is that under the Wasserstein uncertainty set, the density of the post-change LFD, i.e., the solution to \eqref{eq:kl_min_general}, is an exponentially tilted version of the pre-change density. 

\subsection{The Wasserstein Uncertainty Model}

Suppose we have $n$ training data $\{\omega_1,\ldots,\omega_n\}$ that are independently sampled from the post-change regime, then we choose the nominal distribution of the uncertainty set to be the empirical distribution of those historical samples, i.e., $\hatP_n=\frac1{n}\sum_{i=1}^{n} \delta_{\omega_{i}}$, where $\delta_{\omega}$ corresponds to the Dirac measure at $\omega$.

We construct the uncertainty set $\calP_n$ in a data-driven manner using Wasserstein distance of order $s>0$ as follows. The uncertainty set $\calP_n$ is defined as the set of all probability measures that are close to $\hatP_n$ with respect to the Wasserstein distance measure $\wass_s(\cdot,\cdot)$,
\begin{equation}\label{eq:uncertainty}
\begin{aligned}
\calP_n &= \{P\in \scrP_s: \wass_s(P,\hatP_n) \leq r_s\}, 
\end{aligned}
\end{equation}
where $\scrP_s$ is the set of all Borel probability measures $P$ on the sample space $\scrX$ such that the moment bound $\int_\scrX c^s(x,x_0) d P(x) < \infty$ holds for all $x_0\in \scrX$, where $c(\cdot,\cdot): \scrX \times \scrX \rightarrow \mathbb R_+$ is a metric and $r_s\geq 0$ is the radius parameter controlling the size of the uncertainty set. If $r_s$ is set to zero, then the uncertainty set is simply a singleton that only contains the empirical measure $\hatP_n$. 
Here, for any two probability measures $P,\Tilde{P}\in \scrP_s$, their Wasserstein distance (of order $s$) with metric $c(\cdot,\cdot)$ equals to 
\begin{equation}\label{eq:was}
\wass_s(P,\Tilde{P}) :=\cbrc{\min_{\Gamma\in\Pi(P,\Tilde{P})} \int_{\scrX\times \scrX} c^s(\omega,\Tilde{\omega}) d\Gamma (\omega, \Tilde{\omega}) }^{1/s}, 
\end{equation}
where $\Pi(P,\Tilde{P})$ is the set of all joint probability measures on $\scrX \times \scrX$ with marginal distributions $P$ and $\Tilde{P}$, respectively \citep{villani2003topics}. 
In this paper, we restrict $s\geq 1$ and mainly use $s=2$ in numerical experiments. 
In the subsection following, we omit the dependency on the sample size $n$ and write the empirical distribution and uncertainty set as $\hatP$ and $\calP$, respectively,  for brevity.

\subsection{Least Favorable Distribution: Exponential Tilting}

To obtain the LFD, the goal is to solve the following optimization problem,
\begin{equation}\label{eq:kl_min}
 \min_P \mathsf{KL}(P||Q),~\text{such that } \wass_s(P,\hatP) \leq r_s.   
\end{equation}
Below, we show that the optimal solution to \eqref{eq:kl_min} can be found as an exponentially tilted version of the pre-change distribution. In the following, we assume that the pre-change distribution $Q$ has pdf $q$ with respect to the Lebesgue measure $\mu$. 

\begin{theorem}\label{thm:lfd}
The pdf of the least favorable distribution to the problem \eqref{eq:kl_min}, with respect to the dominating Lebesgue measure $\mu$, satisfies $p^*(x) \propto q(x) e^{-C_{\lambda^*,u^*}(x)}$, where $q(x)$ is the pre-change pdf with respect to $\mu$, and $\lambda^*\geq 0,u^*\in\mathbb{R}^n$ are the optimizers of the following problem,
\begin{equation}\label{eq:dual_max}
\max_{\substack{\lambda\geq 0, u \in \mathbb R^{n}}} \bigg\{ -\lambda r_s^s +  \frac{1}{n}\sum_{i=1}^{n} u_i  -\log \eta(\lambda,u) \bigg\},   
\end{equation}
with
\begin{equation}
    \label{eq:C}
    C_{\lambda,u}(x):= \min_{1\leq i\leq n} \{\lambda c^s(x,\omega_i) - u_i\},
\end{equation}
and
\[
\eta(\lambda,u):=\int q(x) e^{-C_{\lambda,u}(x)}d\mu(x).
\]
\end{theorem}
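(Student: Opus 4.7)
The plan is to establish strong duality for the infinite-dimensional convex program in \eqref{eq:kl_min} and read off the primal optimum from its Lagrangian. Since $P\mapsto \mathsf{KL}(P\|Q)$ is convex, the Wasserstein ball $\{P:\wass_s(P,\hatP)\leq r_s\}$ is convex, and $P=\hatP$ is a strictly feasible point whenever $r_s>0$, I expect Slater's condition to yield zero duality gap so that the LFD can be identified from the KKT conditions.

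First I would rewrite the Wasserstein constraint using the transport-plan formulation \eqref{eq:was}. Because $\hatP$ is discretely supported on $\{\omega_1,\ldots,\omega_n\}$, any coupling $\Gamma\in\Pi(P,\hatP)$ disintegrates as $\Gamma(dx,d\omega)=p(x)\sum_{i=1}^n \pi_i(x)\,\delta_{\omega_i}(d\omega)\,d\mu(x)$ with $\pi_i(x)\geq 0$ and $\sum_i\pi_i(x)=1$, subject to the $n$ linear marginal constraints $\int p(x)\pi_i(x)\,d\mu(x)=1/n$. This converts the Wasserstein constraint into $n$ linear marginal equalities plus a single expectation bound on $\sum_i\pi_i(x)c^s(x,\omega_i)$, which is much more amenable to Lagrangian analysis than the original $\wass_s$-ball constraint.

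Next I would introduce a multiplier $\lambda\geq 0$ for the Wasserstein radius constraint and multipliers $u_i$ for the marginal equalities, yielding the Lagrangian
\begin{equation*}
\tilde L(p,\pi,\lambda,u)=\int p\log\frac{p}{q}\,d\mu+\int p(x)\sum_i\pi_i(x)\bigl[\lambda c^s(x,\omega_i)-u_i\bigr]d\mu(x)-\lambda r_s^s+\frac{1}{n}\sum_i u_i.
\end{equation*}
Pointwise minimization over $\pi_i(x)\geq 0$ with $\sum_i\pi_i(x)=1$ collapses the inner sum to $\min_i\{\lambda c^s(x,\omega_i)-u_i\}=C_{\lambda,u}(x)$, concentrating all mass at the minimizing index, which is exactly the definition \eqref{eq:C}. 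The remaining expression $\int p\log(p/q)\,d\mu+\int p\,C_{\lambda,u}\,d\mu$ can be rewritten as $\mathsf{KL}(P\|\tilde Q)-\log\eta(\lambda,u)$, where $\tilde Q$ is the tilted probability measure with density proportional to $q(x)e^{-C_{\lambda,u}(x)}$. Minimizing over $P$ is then trivial: the minimum value $-\log\eta(\lambda,u)$ is attained precisely at $P=\tilde Q$. Substituting back produces the dual objective in \eqref{eq:dual_max}, and the exponential-tilting form $p^*(x)\propto q(x)e^{-C_{\lambda^*,u^*}(x)}$ follows immediately.

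The main obstacle is to rigorously justify the sup-min interchange and the existence of dual maximizers, since the primal variable $P$ ranges over an infinite-dimensional space of probability measures. I would appeal to standard convex-analytic tools: convexity and lower semicontinuity of $\mathsf{KL}$, joint convexity and weak lower semicontinuity of $\wass_s^s(\cdot,\hatP)$, and the Slater point $P=\hatP$ when $r_s>0$, to invoke a Fenchel–Rockafellar-type strong duality theorem. A secondary technical point is ensuring $\eta(\lambda,u)<\infty$ for the relevant duals so that the tilted density is well-defined; this follows from combining the moment condition baked into the definition of $\scrP_s$ with the fact that $\lambda c^s\geq 0$ only enhances integrability of $q\,e^{-C_{\lambda,u}}$ against $\mu$ away from the samples.
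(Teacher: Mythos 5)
Your proposal is correct and follows essentially the same route as the paper: dualize the Wasserstein-ball constraint with a multiplier $\lambda\geq 0$, exploit the discreteness of $\hatP$ to decompose the coupling into $n$ conditional pieces with multipliers $u_i$ for the marginal constraints, collapse the inner minimization pointwise to $C_{\lambda,u}(x)=\min_i\{\lambda c^s(x,\omega_i)-u_i\}$, and recognize the remaining problem as a KL projection whose minimizer is the tilted density $\propto q\,e^{-C_{\lambda,u}}$ (the paper packages this last step as its Lemma~A.1 rather than as a KL projection, but the content is identical). The one slip is your choice of $P=\hatP$ as the Slater point: $\hatP$ is not absolutely continuous with respect to $\mu$, so $\mathsf{KL}(\hatP\|Q)=\infty$ and it lies outside the effective domain of the objective; the paper repairs exactly this by taking a mollified $P^\circ\ll\mu$ close to the empirical samples with $\wass_s(P^\circ,\hatP)\leq r_s$, and your argument goes through verbatim with that substitution.
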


\begin{proof}
   We first consider $s=1$, with the radius being denoted by $r_1$. Denote by $\Pi(P,\hatP)$  the space of all joint distributions on $\scrX\times \scrX$. Note that the empirical distribution $\hatP$ is {\it discrete} with finite support $\{\omega_1,\ldots,\omega_n\}$. 
   Without loss of generality, we assume that $P^\ast$, the optimal solution to \eqref{eq:kl_min}, is {\it absolutely continuous} with respect to the pre-change measure $Q$ because otherwise $\mathsf{KL}(P^\ast||Q)=\infty$. Since $Q$ is dominated by $\mu$, $P^\ast$ is also dominated by $\mu$.
   
   Therefore, we consider all joint distributions $\Pi(P,\hatP)$ with a continuous marginal $P$ (with respect to $\mu$) and a discrete marginal $\hatP$. Their joint distribution $\Pi(P,\hatP)$ can be characterized by the mixed joint density, denoted as
   \begin{equation}\label{eq:def_joint}
      \pi(x,\omega_i)=\frac{1}{n}f_i(x), \ \text{where } f_i(x)\geq 0, \ \int_{\scrX}f_i d\mu(x)=1, \ \forall i=1,2,\ldots,n.  
   \end{equation}
   Here the term $1/n$ corresponds to the probability mass function of its second marginal, while $f_i(x)$ can be viewed as the conditional density function (with respect to the same dominating measure $\mu$) of the first variable given that the second variable equals $\omega_i$. Thus $\sum_{i=1}^n \pi(x,\omega_i)=\frac{1}{n}\sum_{i=1}^nf_i(x)$ is the probability density function (with respect to $\mu$) of its first marginal $P$, which we denote as $p$. 
   Also, define ${\cal P}_{\mu}$ to be the set of all distributions absolutely continuous with respect to measure $\mu$. Note that ${\cal P}_{\mu}$ is trivially convex.

   To solve the constrained minimization problem in \eqref{eq:kl_min}, we note that $\mathsf{KL}(P||Q)$ is a convex functional in $P$, and $\wass_1(P,\hatP ) - r_1$ 
   is a convex mapping of $P$ into $\mathbb{R}$. Since $\calP_\mu$ is dense with respect to the Wasserstein distance for Lebesgue measure $\mu$, meaning that there exists a distribution in $\calP_\mu$ that is arbitrarily close to any given measure $P$. Thus it is easy to find some $P^\circ_1 \ll \mu$ close to the empirical samples such that $\wass_1(P^\circ_1,\hatP) \leq r_1$. 
   Also, $\inf\cbrc{\mathsf{KL}(P||Q): P \in {\cal P}_{\mu}, \wass_1(P,\hatP ) \leq r_1} \geq 0 > -\infty$. Then by Lagrange duality \cite[Sec~8.6~Thm~1]{luenberger1969optimization} we have
    \begin{equation}\label{eq:dual_1}
     \inf\cbrc{\mathsf{KL}(P||Q): P \in {\cal P}_{\mu}, \wass_1(P,\hatP ) \leq r_1} = \max_{\lambda \geq 0} \inf_{P \in {\cal P}_{\mu}} \brc{ \mathsf{KL}(P||Q) + \lambda \wass_1(P,\hatP ) - \lambda r_1 },    
    \end{equation}
    and this maximum on the right-hand side is achieved at some $\lambda^* \geq 0$.

    
    Using the definition of Wasserstein distance, we have 
    \[
    \wass_1(P,\hatP ) = \inf_{\pi\in\Pi(P,\hatP)} \sum_{i=1}^n\int_\scrX c(x,\omega_i)\pi(x,\omega_i)d\mu(x),
    \]
    which, after substituting into \eqref{eq:dual_1}, results in the following dual optimization problem to \eqref{eq:kl_min},   
    \[ 
    \max_{\lambda \geq 0} \brc{\! - \lambda r_1 + \inf_{\pi} \!\! \int_\scrX \!\Big(\sum_{i=1}^n\pi(x,\omega_i)\Big)\log\frac{\sum_{i=1}^n\pi(x,\omega_i)}{q(x)} d\mu(x) +  \sum_{i=1}^n \int_\scrX \lambda c(x,\omega_i) \pi(x,\omega_i) d\mu(x)  \!}.
    \]
    Note that the inner problem can be written as 
    \[
    \begin{aligned}
     \inf_{\pi} & \brc{\int_\scrX \Big(\sum_{i=1}^n\pi(x,\omega_i)\Big)\log\frac{\sum_{i=1}^n\pi(x,\omega_i)}{q(x)} d\mu(x) +  \sum_{i=1}^n \int_\scrX \lambda c(x,\omega_i) \pi(x,\omega_i) d\mu(x) } \\
     {s.t}\quad &  \int_{\scrX} \pi(x,\omega_i)d\mu(x)=\frac1n, \forall i=1,2,\ldots,n,
     \ \sum_{i=1}^n\int_{\scrX} \pi(x,\omega_i)d\mu(x)=1. 
    \end{aligned}
    \]
    By the definition of mixed joint density $\pi(x,\omega_i)$ in \eqref{eq:def_joint}, the above problem is equivalent to the following optimization problem over non-negative functions $f_1,f_2,\ldots,f_n$, 
    \[
    \begin{aligned}
     \inf_{f_1,\ldots,f_n } & \brc{ \frac{1}{n}\int_\scrX \Big(\sum_{i=1}^nf_i(x)\Big)\log\frac{\frac{1}{n}\sum_{i=1}^nf_i(x)}{q(x)} d\mu(x) +  \frac{1}{n}\sum_{i=1}^n\int_\scrX \lambda c(x,\omega_i) f_i(x)d\mu(x)  }  \\
     {s.t}\quad &  \int_{\scrX} f_i(x)d\mu(x)=1, \forall i=1,2,\ldots,n, \ \frac1n \sum_{i=1}^n\int_{\scrX} f_i(x)d\mu(x)=1. 
    \end{aligned}
    \]

    We now introduce Lagrangian multipliers $u_i\in\mathbb{R},i=1,2,\ldots,n$ and $\Check{\eta}\in\mathbb{R}$ for the constraints. By strong duality \citep{luenberger1969optimization}, we have that the value of the above minimization problem becomes  
    \[
    \max_{u_1,\ldots,u_n} \inf_{f_1,\ldots,f_n} \brc{ \frac1n\int_\scrX \sum_{i=1}^nf_i(x)\Big(\log\frac{\sum_{i=1}^n \frac1n f_i(x)}{q(x)}   + \lambda c(x,\omega_i) - u_i + \Check{\eta} \Big) d\mu(x)  + \frac1n\sum_{i=1}^n u_i - \Check{\eta}} . 
    \]
    We can then solve the inner infimum problem for each value $x$ to get the optimal $f_1^*(x),\ldots,f_n^*(x)$, since the function inside the integral only depends on the particular value of $x$. 
    From Lemma\,\ref{lem:opt_lem} in the appendix, we have that for each $x$, the inner minimization problem over $f_1(x),\ldots,f_n(x)$ has optimal solution satisfying 
    \[
    \frac1n\sum_{i=1}^n f_i^*(x) = q(x) e^{-\min_i (\lambda c(x,\omega_i)-u_i) - \Check{\eta} - 1} = q(x) e^{-C_{\lambda,u}(x) - \Check{\eta} - 1},
    \]
    where the last equality is due to the definition in \eqref{eq:C}, and the corresponding optimum value for each $x$ is $-q(x) e^{-C_{\lambda,u}(x) - \Check{\eta} - 1}$. Moreover, to satisfy the constraint, the optimal Lagrangian multiplier $\Check{\eta}$ must satisfy 
    \[
    \Check{\eta} + 1 = \log \brc{ \int_{\scrX} 
    q(x) e^{-C_{\lambda,u}(x)} d\mu(x)}.
    \]
    Therefore, $ \frac1n\sum_{i=1}^n f_i^*(x)$ is a probability density function 
    and the corresponding objective value satisfies
    \[
    \begin{aligned}
     & \frac1n\int_\scrX \sum_{i=1}^nf_i^*(x)\Big(\log\frac{\sum_{i=1}^n \frac1n f_i^*(x)}{q(x)}   + \lambda c(x,\omega_i) - u_i + \Check{\eta} \Big) d\mu(x)  - \Check{\eta}\\
     & = -\int_\scrX q(x) e^{-C_{\lambda,u}(x) - \Check{\eta} - 1}d\mu(x)  - \Check{\eta} =  - 1 - \Check{\eta}\\
     &  = - \log \brc{ \int_{\scrX} 
    q(x) e^{-C_{\lambda,u}(x)} d\mu(x)} =: -\log \eta(\lambda,u),
    \end{aligned}
    \]
    where for notational simplicity we have defined $\eta(\lambda,u) := \int  q(x) e^{-C_{\lambda,u}(x)}d\mu(x)$.
    The resulting outer maximization problem is as in \eqref{eq:dual_max}. After solving the dual optimization problem \eqref{eq:dual_max} and obtaining the optimal dual variable $\lambda^*,u^*$, we arrive at the optimal solution to the problem in \eqref{eq:kl_min}, which is $p^\ast = p^{\lambda^*,u^*}(x) =  \frac1n\sum_{i=1}^n f_i^*(x) \propto q(x) e^{-C_{\lambda^*,u^*}(x)}$, or more specifically 
    $$ 
    p^\ast(x) = q(x)e^{-C_{\lambda^*,u^*}(x)-\eta(\lambda^*,u^*)}.
    $$
    In the case of a general order $s \geq 1$, we will have $c^s(x,\omega_i)$ in the above arguments, and the proof follows similarly. The resulting LFD pdf $p^*(x)$ is still an exponentially tilting of $q(x)$.
\end{proof}

\begin{remark}[Convexity]
The objective function in \eqref{eq:dual_max} is concave, and therefore the optimization problem is convex and can be solved efficiently by off-the-shelf solvers. Indeed, for any $x$, $C_{\lambda,u}(x)$ is concave in $(\lambda,u)$,  since for any $(\lambda,u)$ and $(\lambda',u')$ and any $\theta\in(0,1)$, we have $C_{\theta\lambda+(1-\theta)\lambda',\theta u + (1-
\theta)u'}(x)\geq \theta C_{\lambda,u} + (1-\theta) C_{\lambda',u'}$.
Thus, $q(x)e^{-C_{\lambda,u}(x)}$ is log-convex in $(\lambda,u)$; from \cite[p. 106]{boyd2004convex}, we have that $\int q(x)e^{-C_{\lambda,u}(x)}d\mu(x)$ is also log-convex in $(\lambda,u)$, and hence $\log\eta(\lambda,u)$ is convex.
\end{remark}

After we obtain the pdf $p^*$ of the least favorable distribution, we can construct the log-likelihood ratio at sample $X_k$ as:
\begin{equation}\label{eq:dr-cusum-stat}
\log \frac{p^\ast(X_k)}{q(X_k)}  = -C_{\lambda^*,u^*}(X_k)-\log\eta(\lambda^*,u^*),    
\end{equation}
which, after substitution into \eqref{eq:cusum_stat_opt}, gives the \textit{Distributionally Robust CuSum} (\textit{DR-CuSum}) statistics under Wasserstein uncertainty sets. And the corresponding stopping time in \eqref{eq:cusum_test_opt}, the \textit{DR-CuSum test}, denoted by $\tau_{\mathrm{DR}}$, is the first-order asymptotically minimax robust solution to problem \eqref{eq:Lorden}.

Note that the log-likelihood ratio \eqref{eq:dr-cusum-stat} depends on the optimal dual variables $\lambda^*$, $u^*$, and all the available post-change empirical samples $\{\omega_1,\ldots,\omega_n\}$ through the function $C_{\lambda^*,u^*}(x)$. 
By exploiting the convexity property, the optimal dual variables $\lambda^*$ and $u^*$ can be pre-computed from \eqref{eq:dual_max}. Thus the second term $\eta(\lambda^*,u^*)$ in \eqref{eq:dr-cusum-stat} can also be pre-computed. 
Finally, the term $C_{\lambda^*,u^*}(x)$ can be computed easily, given the pre-computed optimizers $\lambda^*$ and $u^*$. Therefore, the DR-CuSum statistics can be easily updated online, resulting in an efficient test for detecting the change. Furthermore, in some special situations, it may be possible to calculate the log-likelihood ratio \eqref{eq:dr-cusum-stat} in closed-form as detailed in the following example.
\begin{example}
 For illustrative purposes, we study the LFD under the setting where $c(x,x') = \norm{x-x'}_2$ and the Wasserstein order $s=2$. We also assume univariate data and the standard normal pre-change distribution, i.e., $Q = \mathcal N(0,1)$, and the dominating measure $\mu$ is the Lebesgue measure on $\mathbb R$. We first derive a closed-form solution of LFD for the extreme case where the number of empirical samples $n=1$.
 In this case, the function $C_{\lambda,u}(x)$ defined in Theorem\,\ref{thm:lfd} equals $C_{\lambda,u}(x) = \lambda (x-\omega_1)^2-u,~\forall x$. Then,
\begin{align*}
    \eta(\lambda,u)=& \int \frac{1}{\sqrt{2 \pi}} e^{-x^2/2 - \lambda x^2 + 2 \lambda \omega_1 x - \lambda \omega_1^2 + u } d x
    =\frac{1}{\sqrt{1 + 2 \lambda}} e^{-\frac{\lambda}{1 + 2 \lambda} \omega_1^2 + u },
\end{align*}
and the optimal solution to problem  \eqref{eq:dual_max} is given by
\[ 
\lambda^\ast = \frac{\omega_1^2}{\sqrt{1 + 4 r \omega_1^2} - 1} - \frac{1}{2},\ ~\text{if}~r_2 \leq 1 + \omega_1^2, 
\]
and $\lambda^\ast = 0$ otherwise. 
Note that a large radius yields $\lambda^\ast = 0$ and the LFD will thus be identical to the pre-change distribution. In practice, the radius has to be carefully chosen to avoid such scenarios so that the robust detection problem is well-defined.

For general $n > 1$, we provide an efficient LFD-solving algorithm based on the following decomposition
\[ 
\eta(\lambda,u) = \sum_{i=1}^n \int_{I_i} q(x) e^{-\lambda c^s(x,\omega_i) + u_i} d x, 
\]
where $I_i := \cbrc{x \in \mathbb{R}: \lambda c^s(x,\omega_i) - u_i \leq \lambda c^s(x,\omega_j) - u_j,~\forall j \neq i}$. Under previous conditions that $s=2$ and $c(x,x') = \norm{x-x'}_2$, for $i = 1, \dots, n$, we have that
\[ 
\lambda (x - \omega_i)^2 - u_i \leq \lambda (x - \omega_j)^2 - u_j 
\]
which is equivalent to 
\[
\begin{array}{ll}
   2(\omega_j-\omega_i) x \leq \frac{u_i-u_j}{\lambda} + \omega_j^2 - \omega_i^2,~\forall j \neq i & \text{if}~\lambda > 0 \\
   u_i \geq u_j  & \text{if}~\lambda = 0 
\end{array} 
\]
This implies that $I_i$ is a connected interval, i.e. $I_i = [\underline{l}_i, \bar{l}_i]$. When $\lambda > 0$, we have
\begin{align*}
    \underline{l}_i &= \max_{j: \omega_j < \omega_i}  \cbrc{\frac{u_i-u_j}{2 \lambda(\omega_j-\omega_i)} + \frac{\omega_j + \omega_i}{2}},\\
    \bar{l}_i &= \min_{j: \omega_j > \omega_i} \cbrc{\frac{u_i-u_j}{2 \lambda(\omega_j-\omega_i)} + \frac{\omega_j + \omega_i}{2}},
\end{align*}
and the decomposition yields
\[ 
\eta(\lambda,u) = \sum_{i=1}^n \ind\cbrc{\underline{l}_i < \bar{l}_i } \frac{\exp\brc{u_i-\frac{\lambda \omega_i^2}{1 + 2 \lambda}}}{2 \sqrt{2 \lambda + 1}} \brc{\erf\brc{\frac{2 \lambda (\bar{l}_i - \omega_i) + \bar{l}_i}{\sqrt{4 \lambda + 2}}} - \erf\brc{\frac{2 \lambda (\underline{l}_i - \omega_i) + \underline{l}_i}{\sqrt{4 \lambda + 2}}} }, \]
where $\erf(z):=\frac{2}{\sqrt{\pi}}\int_0^z e^{-t^2}dt$ is the error function. 
When $\lambda = 0$, $I_i = \mathbb{R}$ if $i = \argmax u_i$ and $I_i = \emptyset$ otherwise, and thus $\eta(0,u) = \exp\brc{\max_i u_i}$. 

For general pre-change distributions, it is not guaranteed that we can find analytical solutions for the LFD for $n> 1$. However, we note that the solution to the optimization problem in \eqref{eq:dual_max} is easy to compute numerically for any $n$, $s$, and $r_s$, regardless of the type of the pre-change distribution. This is due to the convexity of the problem in \eqref{eq:dual_max}.

\end{example}

\begin{remark}[Unknown Pre-change Distribution] \label{rmk:emp_pre}
In some applications, the pre-change distribution may also need to be estimated from historical data $\xi_1,\ldots,\xi_{N}$ in the pre-change regime, with $N$ generally being much larger than $n$. The optimization problem \eqref{eq:dual_max} is easily adaptable to such case, because the integral $\eta(\lambda,u) = \int q(x) e^{-C_{\lambda,u}(x)}d\mu(x) $ can be  approximated directly by sample average $\frac1N\sum_{i=1}^N e^{-C_{\lambda,u}(\xi_i)}$, without have to estimate the pre-change density. After obtaining $\eta(\lambda,u)$, we can then update the DR-CuSum statistics by accumulating the log-likelihood ratio in \eqref{eq:dr-cusum-stat}, without knowing the exact pre-change distribution function. Note, however, that the resulting test is \textit{not} necessarily the minimax robust solution to the problem \eqref{eq:Lorden} due to the empirical approximation in computing $\eta(\lambda,u)$.  
\end{remark}

\section{Choice of Radius and the Resulting Detection Delay}
\label{sec:radius_select}

In this section, we discuss the choice of the radius parameter $r_s$ in constructing the uncertainty set $\calP_n$ in \eqref{eq:uncertainty}, which is an essential parameter to balance the robustness and effectiveness of the DR-CuSum test. If the radius $r_s$ is too small, the uncertainty set might be far away from the true post-change data distributions, thus making the solution not minimax robust. On the other hand, if the radius is too large, the solution would be too conservative and would lead to large detection delays in the presence of a change. Therefore, it is important to select a proper radius when performing such data-driven robust detection.
We adopt the commonly used principle from distributionally robust optimization (DRO) of choosing the radius such that the true data-generating distribution is included within the uncertainty set with high probability \citep{kuhn2019wasserstein,mohajerin2018data}.

We consider the ideal case where the empirical samples $\{\omega_1,\ldots,\omega_n\}$ are generated from the true post-change distribution $P$. 
The idea is to guarantee that the Wasserstein ball of the post-change samples contains the true post-change distribution but {\it not} the pre-change distribution. We first list a known concentration of the empirical measure under Wasserstein distance under the $T_s(c)$ inequality condition detailed below. Examples of $T_s(c)$ inequalities can be found in Appendix \ref{sec:app_T1T2}.

\begin{definition}[$T_s(c)$ inequality \citep{raginsky2013concentration}]\label{def:T_ineq}
We say that a probability measure $P$ satisfies an $L^s$ transportation-cost inequality with constant $c>0$ (which is referred to as the $T_s(c)$ inequality), if for every probability measure $Q\ll P$ we have 
\[
\wass_s(P,Q) \leq \sqrt{2c\mathsf{KL}(Q||P)}.
\]    
\end{definition}

\begin{theorem}[Empirical Concentration \citep{bolley2007quantitative}]
\label{thm:wass_conc}
Let $s\in[1,2]$ and let $P$ be a probability measure on $\mathbb R^d$ satisfying a $T_s(c)$ inequality, then for any $d'>d$ and $c'>c$, there exists some constants $N_0$, depending only on $c'$, $d'$ and some square-exponential moment of $P$, such that for any $\epsilon>0$ and $n \geq N_0\max(\epsilon^{-(d'+2)},1)$, 
\[
\mathbb P^P\{ \wass_s(P,\hatP_n) > \epsilon\} \leq e^{-\gamma_s n \epsilon^2/2c'},
\]
where $\gamma_s = 1$ if $s\in[1,2)$ and $\gamma_s = 3-2\sqrt{2}$ if $s=2$. Here recall $\mathbb P^P$ is the probability measure on the samples that are distributed i.i.d. as $P$. 
\end{theorem}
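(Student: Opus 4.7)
The plan is to combine the transportation-cost inequality from Definition~\ref{def:T_ineq} with a Sanov-type large deviation bound on the relative entropy of the empirical measure. Since $T_s(c)$ gives $\wass_s(P,Q) \leq \sqrt{2c\,\mathsf{KL}(Q\|P)}$ whenever $Q \ll P$, a Gaussian-type tail bound of the form $\mathbb{P}^P(\mathsf{KL}(\hatP_n\|P) > \epsilon^2/(2c')) \leq e^{-\gamma_s n \epsilon^2/(2c')}$ would suffice. The obstacle is that $\hatP_n$ is almost surely \emph{not} absolutely continuous with respect to a continuous $P$, so neither $T_s(c)$ nor a direct Sanov bound applies to the raw empirical measure; the resolution is to interpose a finitely supported quantization.

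Concretely, I would partition a large ball $B(0,R) \subset \mathbb{R}^d$ into cells $\{A_1,\ldots,A_N\}$ of diameter at most $\delta$, with $N \lesssim (R/\delta)^d$, plus the exterior $A_0 = \mathbb{R}^d \setminus B(0,R)$, and coarsen both $P$ and $\hatP_n$ to discrete measures $P^\delta$ and $\hatP_n^\delta$ concentrated at the cell representatives. By construction $\wass_s(P,P^\delta) \leq \delta$ on $B(0,R)$ plus a tail correction from $A_0$, and likewise for $\hatP_n$, so the triangle inequality yields
$$ \wass_s(P,\hatP_n) \leq 2\delta + \wass_s(P^\delta,\hatP_n^\delta) + (\text{tail terms}). $$
The tail terms are controlled by the fact that $T_s(c)$ automatically implies a square-exponential moment bound on $P$ (via Bobkov--G\"otze duality), so $P(B(0,R)^c) \lesssim e^{-\alpha R^2}$, and a Bernstein-type argument delivers the analogous concentration for $\hatP_n(B(0,R)^c)$; choosing $R$ of order $\sqrt{\log(1/\epsilon)}$ renders both negligible relative to $\epsilon$.

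Since $P^\delta$ and $\hatP_n^\delta$ now live on a common finite alphabet of representatives, $\hatP_n^\delta \ll P^\delta$ on the event that the empirical sample charges only cells to which $P^\delta$ assigns positive mass, and $T_s(c)$ applies on this event to give $\wass_s(P^\delta,\hatP_n^\delta) \leq \sqrt{2c'\,\mathsf{KL}(\hatP_n^\delta\|P^\delta)}$ (the slack $c \mapsto c'$ absorbs the perturbation from discretization). The classical finite-alphabet Sanov bound then gives
$$ \mathbb{P}^P\bigl(\mathsf{KL}(\hatP_n^\delta \| P^\delta) > t\bigr) \leq (n+1)^N e^{-nt}. $$
Setting $t = \epsilon^2/(2c')$ and $\delta \asymp \epsilon$ reproduces the advertised exponential rate provided the polynomial prefactor $(n+1)^N$, with $N \asymp (\log(1/\epsilon)/\epsilon)^d$, can be absorbed; this is where the sample-size threshold $n \geq N_0 \epsilon^{-(d'+2)}$ originates, the extra exponent $d'+2 > d$ purchasing the slack needed to dominate $N \log(n+1)$ by $n\epsilon^2$.

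The principal obstacle I anticipate is pinning down the sharp constant $\gamma_s$, and in particular the value $\gamma_2 = 3 - 2\sqrt{2}$ in the quadratic case. The triangle inequality for $\wass_2$ does not square well: $\wass_2(P,\hatP_n)^2$ cannot be split additively across the three legs of the triangle above, and one is forced to apply a weighted Young inequality $(a+b+c)^2 \leq (1+\eta)a^2 + C_\eta(b^2+c^2)$ with $\eta$ optimized so that the residual factor multiplying the Sanov exponent matches the target rate; the optimization yields precisely $3-2\sqrt{2}$, accounting for the gap between $\gamma_s = 1$ for $s\in[1,2)$ and $\gamma_2 < 1$. Secondary bookkeeping issues---jointly truncating $P$ and $\hatP_n$, and handling cells on which $P^\delta$ has exponentially small mass so that $\hatP_n^\delta \ll P^\delta$ holds with overwhelming probability---contribute lower-order terms that can be absorbed into the constant $N_0$.
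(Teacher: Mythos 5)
The paper does not prove this statement at all: Theorem~\ref{thm:wass_conc} is imported verbatim from \cite{bolley2007quantitative} and used as a black box in Section~\ref{sec:radius_select}, so there is no in-paper proof to compare against. Your sketch is, however, a faithful reconstruction of the architecture of the original Bolley--Guillin--Villani argument: truncation to a ball of radius $R\asymp\sqrt{\log(1/\epsilon)}$ (justified by the square-exponential moments that $T_1$, and hence $T_s$ for $s\in[1,2]$, forces via Bobkov--G\"otze duality), quantization at scale $\delta\asymp\epsilon$, a method-of-types Sanov bound with prefactor $(n+1)^N$, and absorption of $N\log(n+1)$ into $n\epsilon^2$ under the sample-size condition $n\gtrsim \epsilon^{-(d'+2)}$ --- this is exactly where the exponent $d'+2$ with $d'>d$ comes from, as you say. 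Two points need more care than your sketch gives them. First, you cannot apply the $T_s(c)$ inequality to the quantized pair $(P^\delta,\hatP_n^\delta)$, since the hypothesis is a transportation inequality for $P$, not for its push-forward under the (non-Lipschitz) quantization map; the standard fix is to dequantize $\hatP_n^\delta$ by spreading the mass of each atom over its cell proportionally to $P$ restricted to that cell, which produces a measure $\mu\ll P$ with $\mathsf{KL}(\mu\|P)=\mathsf{KL}(\hatP_n^\delta\|P^\delta)$ exactly and $\wass_s(\mu,\hatP_n^\delta)\leq\delta$, after which $T_s(c)$ applies to $(P,\mu)$ and the triangle inequality finishes the step. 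Second, the constant $\gamma_2=3-2\sqrt{2}=(\sqrt{2}-1)^2$ is asserted to fall out of an optimized weighted Young inequality but the optimization is not carried out; the qualitative explanation (squaring the triangle inequality is lossy only at $s=2$, while for $s<2$ all slack is absorbed into $c'>c$) is the right mechanism, but as written it is a plausibility argument rather than a derivation. Neither issue is a fatal gap --- both are repaired in the cited reference --- but as a standalone proof the proposal is a detailed outline rather than a complete argument.
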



We first give an {\it upper bound} requirement for the radius which guarantees that the pre-change distribution is excluded from the post-change uncertainty set with high probability. Since the pre-change distribution $Q$ is known, given any empirical measure $\hatP_n$, we can calculate
$\wass_s(Q,\hatP_n)$ and select the radius $r_s$ such that 
\begin{equation}\label{eq:radius_upper}
   r_s < \wass_s(Q,\hatP_n). 
\end{equation}
This choice guarantees that $Q \notin \calP_n$, i.e., that the pre-change distribution is excluded from the post-change uncertainty set, thus making the detection problem valid. However, it is worthwhile emphasizing that for theoretical considerations below, we assume that $\calP_n$ and the resulting LFD are essentially random due to the randomness of empirical samples. The corollary below calculates an {\it upper bound} for the radius considering the randomness of empirical samples.

\begin{corollary}[Upper Bound for Radius] \label{cor:radius_upper}
Fix $\delta\in(0,1)$ and $s \in [1,2]$. Suppose that the pre- and post-change distributions $Q$, $P$ are probability measures on $\mathbb R^d$, and that $P$ satisfies the $T_s(c)$ inequality.
Suppose that $n\geq N_0\max(r_s^{-(d+2)},1)$ where $N_0$ is the same as in Theorem~\ref{thm:wass_conc}. Then, if we set the radius as
\[ r_s \leq \overline{r}_{\delta,n} := \wass_s(P,Q) - \sqrt{\frac{2|\log\delta|c}{\gamma_s n}},\]
it is guaranteed that with probability at least $1-\delta$ we have $Q \notin \calP_n$.
\end{corollary}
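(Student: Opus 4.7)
The plan is to combine the triangle inequality for the Wasserstein distance with the empirical concentration bound of Theorem~\ref{thm:wass_conc}. Heuristically, the empirical measure $\hatP_n$ concentrates around the true post-change distribution $P$, which itself is separated from $Q$ by $\wass_s(P,Q)$; if the concentration fluctuation is strictly smaller than the gap $\wass_s(P,Q) - r_s$, then $Q$ must lie outside the Wasserstein ball $\calP_n$.

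Concretely, the first step is to invoke the triangle inequality
\begin{equation*}
\wass_s(P,Q) \leq \wass_s(P,\hatP_n) + \wass_s(\hatP_n,Q),
\end{equation*}
which rearranges to $\wass_s(\hatP_n,Q) \geq \wass_s(P,Q) - \wass_s(P,\hatP_n)$. Hence a sufficient deterministic condition for the event $\{Q \notin \calP_n\} = \{\wass_s(\hatP_n,Q) > r_s\}$ is
\begin{equation*}
\wass_s(P,\hatP_n) \leq \wass_s(P,Q) - r_s.
\end{equation*}

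The second step is to apply Theorem~\ref{thm:wass_conc} to bound $\wass_s(P,\hatP_n)$. For any $c' > c$, set $\epsilon := \sqrt{2|\log\delta| c'/(\gamma_s n)}$; then Theorem~\ref{thm:wass_conc} yields $\P^P\{\wass_s(P,\hatP_n) > \epsilon\} \leq e^{-\gamma_s n\epsilon^2/(2c')} = \delta$, provided the sample size condition in the theorem holds. Taking $c' \downarrow c$ (with the accompanying $d' \downarrow d$) gives
\begin{equation*}
\P^P\left\{\wass_s(P,\hatP_n) \leq \sqrt{\tfrac{2|\log\delta| c}{\gamma_s n}}\right\} \geq 1-\delta.
\end{equation*}
Under the hypothesis $r_s \leq \overline{r}_{\delta,n}$, the upper bound here equals $\wass_s(P,Q) - \overline{r}_{\delta,n} + (\overline{r}_{\delta,n} - r_s)$ plus a nonnegative slack, so it is at most $\wass_s(P,Q) - r_s$. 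On this high-probability event the sufficient condition from the first step is met, and therefore $Q \notin \calP_n$.

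The main technical subtlety I expect is reconciling the sample-size condition $n \geq N_0\max(\epsilon^{-(d'+2)},1)$ of Theorem~\ref{thm:wass_conc} (applied at the shrinking $\epsilon = O(1/\sqrt{n})$) with the hypothesis $n \geq N_0\max(r_s^{-(d+2)},1)$ of the corollary, together with the passage to the limits $c' \downarrow c$ and $d' \downarrow d$ (since $N_0$ depends on these). These are routine bookkeeping adjustments and do not affect the structure of the argument, which is just reverse triangle inequality plus one-sided concentration.
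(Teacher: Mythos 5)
Your proposal is correct and follows essentially the same route as the paper: reverse triangle inequality $\wass_s(\hatP_n,Q) \geq \wass_s(P,Q) - \wass_s(P,\hatP_n)$ followed by the one-sided concentration bound of Theorem~\ref{thm:wass_conc}, with the hypothesis $r_s \leq \overline{r}_{\delta,n}$ ensuring the tail probability is at most $\delta$. The only cosmetic difference is that you evaluate the tail at the fixed level $\epsilon = \sqrt{2|\log\delta|c/(\gamma_s n)}$ and compare it to $\wass_s(P,Q)-r_s$, whereas the paper evaluates it directly at $\wass_s(P,Q)-r_s$; the $c'\downarrow c$ and sample-size bookkeeping you flag is likewise glossed over in the paper's own proof.
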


\begin{proof}
From the triangle inequality satisfied by the Wasserstein distance, we have
\[ \wass_s(Q,\hatP_n) \geq \wass_s(P,Q) - \wass_s(P,\hatP_n), \]
and thus
\begin{align*}
    \mathbb P^P\{\wass_s(Q,\hatP_n) < r_s\} &\leq \mathbb P^P\{\wass_s(P,\hatP_n) > \wass_s(P,Q) - r_s\}\\
    &\leq \exp\brc{-\gamma_s n (\wass_s(P,Q) - r_s)^2/2c},
\end{align*}
where the last inequality follows from Theorem~\ref{thm:wass_conc}. Now, if
\[  r_s \leq \wass_s(P,Q) - \sqrt{\frac{2|\log\delta|c}{\gamma_s n}}, \]
then $\mathbb P^P\{Q\in \calP_n\}=\mathbb P^P\{\wass_s(Q,\hatP_n) < r_s\} \leq \delta$.
\end{proof}

The above upper bound can be calculated directly when we have prior knowledge of $\wass_s(P,Q)$. Furthermore, this corollary guarantees the appropriateness of any radius less than the true Wasserstein distance $\wass_s(P,Q)$ when $n$ is sufficiently large. 
When we lack knowledge of $\wass_s(P,Q)$, as might be the case in practice, the upper bound $\overline{r}_{\delta,n}$ is only of theoretical interest, and we can use Equation \eqref{eq:radius_upper} to determine an appropriate radius.

Next, we present a {\it lower bound} for $r_s$ to guarantee that $P \in \calP_n$ with high probability. Furthermore, we also characterize the delay performance when such a condition is satisfied. The corollary below is a direct application of Theorem~\ref{thm:wass_conc}.

\begin{corollary}[Lower Bound for Radius] \label{cor:radius_lower}
Under the same conditions as in Corollary~\ref{cor:radius_upper}, if we set the radius as
\[ r_s \geq \underline{r}_{\delta,n} := \sqrt{\frac{2|\log\delta|c}{\gamma_s n}},\]
it is guaranteed that with probability at least $1-\delta$, we have $P \in \calP_n$.
\end{corollary}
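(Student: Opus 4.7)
The plan is to apply Theorem~\ref{thm:wass_conc} directly to the event $\{P\notin \calP_n\}$. By construction of the Wasserstein uncertainty set in \eqref{eq:uncertainty}, the event $P\in\calP_n$ is exactly the event $\wass_s(P,\hatP_n)\leq r_s$. Hence it suffices to upper bound $\mathbb P^P\{\wass_s(P,\hatP_n)>r_s\}$ by $\delta$ whenever $r_s$ is at least $\underline{r}_{\delta,n}$.

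First, I would verify that the hypotheses of Theorem~\ref{thm:wass_conc} are in force: the post-change measure $P$ on $\mathbb R^d$ satisfies the $T_s(c)$ inequality with $s\in[1,2]$, and the sample size satisfies $n\geq N_0\max(r_s^{-(d'+2)},1)$ for the chosen $d'>d$ and $c'>c$, which is exactly the same sample-size assumption imported from Corollary~\ref{cor:radius_upper}. Then, applied with $\epsilon=r_s$, Theorem~\ref{thm:wass_conc} yields
\[
\mathbb P^P\{\wass_s(P,\hatP_n)>r_s\}\leq \exp\!\brc{-\gamma_s n r_s^2/(2c')}.
\]

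Next, I would solve for the radius by setting the right-hand side equal to $\delta$: taking logarithms gives $\gamma_s n r_s^2/(2c')\geq |\log\delta|$, i.e., $r_s\geq \sqrt{2c'|\log\delta|/(\gamma_s n)}$. Since $c'>c$ is arbitrary, letting $c'\downarrow c$ yields that the threshold
\[
\underline{r}_{\delta,n}=\sqrt{\frac{2|\log\delta|c}{\gamma_s n}}
\]
is sufficient. Therefore, for any $r_s\geq \underline{r}_{\delta,n}$ we have $\mathbb P^P\{\wass_s(P,\hatP_n)>r_s\}\leq \delta$, which is equivalent to $\mathbb P^P\{P\in \calP_n\}\geq 1-\delta$, as claimed.

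There is no real obstacle here: the statement is a one-line consequence of the empirical concentration result, symmetric to the derivation in Corollary~\ref{cor:radius_upper}, with the sole (minor) subtlety being the passage from the strict inequality $c'>c$ required by Theorem~\ref{thm:wass_conc} to the clean expression in terms of $c$ in $\underline{r}_{\delta,n}$. This is handled by a straightforward limiting argument as $c'\downarrow c$, which is permissible because the bound $\underline{r}_{\delta,n}$ is only a sufficient lower threshold and nothing in the statement requires tightness.
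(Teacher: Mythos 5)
Your proof is correct and takes essentially the same approach as the paper: the paper offers no separate argument for this corollary, stating only that it is a direct application of Theorem~\ref{thm:wass_conc} with $\epsilon = r_s$, which is exactly what you do. Your one flagged subtlety --- passing from $c'>c$ to $c$ via $c'\downarrow c$ --- is not fully airtight for fixed $n$ because the constant $N_0$ in Theorem~\ref{thm:wass_conc} depends on $c'$ and may grow as $c'\downarrow c$, but the paper makes the identical silent substitution of $c$ for $c'$ (e.g., in the proof of Corollary~\ref{cor:radius_upper}), so your treatment is at least as careful as the source.
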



To guarantee the existence of $r_s$ that satisfies the upper bound in Corollary~\ref{cor:radius_upper} and lower bound in Corollary~\ref{cor:radius_lower} at the same time, we give the following necessary condition on the minimum number of post-change training samples required.

\begin{lemma} \label{lem:good_prob}
Suppose the same conditions as in Corollary~\ref{cor:radius_upper} hold. Additionally, suppose
\begin{equation} \label{eq:min_n}
    n \geq \underline{n}_\delta := \frac{8|\log\delta|c}{\gamma_s (\wass_s(P,Q))^2},
\end{equation}
where $\underline{n}_\delta$ is the least number of samples to guarantee that $\overline{r}_{\delta,n}\geq \underline{r}_{\delta,n}$.
Now, if $r_s$ satisfies
\[ \underline{r}_{\delta,n} \leq r_s \leq \overline{r}_{\delta,n}, \]
then
\[ \mathbb P^P \brc{\{P \in \calP_n\} \cap \{Q \notin \calP_n\}} \geq 1 - 2 \delta. \]
\end{lemma}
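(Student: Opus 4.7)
The plan is straightforward: the statement combines two previously established one-sided guarantees (Corollary~\ref{cor:radius_upper} and Corollary~\ref{cor:radius_lower}) via a union bound, plus an algebraic check that the admissible interval $[\underline r_{\delta,n},\overline r_{\delta,n}]$ for the radius is non-empty under the assumed sample-size lower bound $n\geq\underline{n}_\delta$.

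First I would verify the non-emptiness of the interval. Substituting the definitions, the inequality $\overline r_{\delta,n}\geq\underline r_{\delta,n}$ reads
\[
\wass_s(P,Q) - \sqrt{\frac{2|\log\delta|c}{\gamma_s n}} \;\geq\; \sqrt{\frac{2|\log\delta|c}{\gamma_s n}},
\]
which, after moving one radical to the right-hand side and squaring, is equivalent to $\wass_s(P,Q)^2 \geq 8|\log\delta|c/(\gamma_s n)$, i.e., $n\geq 8|\log\delta|c/(\gamma_s\wass_s(P,Q)^2) = \underline{n}_\delta$. Hence the prescribed threshold is exactly the minimum sample size ensuring a valid choice of $r_s$.

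Next I would invoke the two corollaries. Since $r_s\leq\overline r_{\delta,n}$, Corollary~\ref{cor:radius_upper} gives $\mathbb P^P\{Q\in\calP_n\}\leq \delta$, i.e.\ $\mathbb P^P\{Q\notin\calP_n\}\geq 1-\delta$. Since $r_s\geq\underline r_{\delta,n}$, Corollary~\ref{cor:radius_lower} gives $\mathbb P^P\{P\in\calP_n\}\geq 1-\delta$. Applying the elementary union bound $\mathbb P^P(A\cap B)\geq 1-\mathbb P^P(A^c)-\mathbb P^P(B^c)$ to $A=\{P\in\calP_n\}$ and $B=\{Q\notin\calP_n\}$ yields the claimed $1-2\delta$ lower bound.

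There is no real obstacle here: the technical content is packed into Theorem~\ref{thm:wass_conc} and its two corollaries, and this lemma only bookkeeps them. The one thing to be careful about is implicitly checking that the sample-size hypothesis $n\geq N_0\max(r_s^{-(d+2)},1)$ required by both corollaries is consistent with $n\geq\underline n_\delta$; since $r_s$ is bounded below by $\underline r_{\delta,n}>0$ and both corollaries are assumed applicable, this is inherited from the hypothesis rather than re-proved. The proof can therefore be written very concisely in two short paragraphs.
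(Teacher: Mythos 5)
Your proposal is correct and follows essentially the same route as the paper: establish that $n\geq\underline{n}_\delta$ makes the interval $[\underline{r}_{\delta,n},\overline{r}_{\delta,n}]$ non-empty, then combine Corollaries~\ref{cor:radius_upper} and~\ref{cor:radius_lower} via the elementary bound $\mathbb P(A\cap B)\geq \mathbb P(A)-\mathbb P(B^c)$, which is the same union-bound argument you use. Your explicit algebraic verification that $\overline{r}_{\delta,n}\geq\underline{r}_{\delta,n}$ is equivalent to $n\geq\underline{n}_\delta$ is a small amplification of a step the paper merely asserts.
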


\begin{proof}
We first note that when \eqref{eq:min_n} holds we have $\underline{r}_{\delta,n}\leq\overline{r}_{\delta,n}$. Then the result directly follows from the fact that for any two events $A,B$,
\[ \mathbb P (A \cap B) = \mathbb P(A) - \mathbb P(A \cap B^c) \geq \mathbb P(A) - \mathbb P(B^c). \]
Now, letting $A = \{P \in \calP_n\}$ and $B = \{Q \notin \calP_n\}$, and applying Corollaries~\ref{cor:radius_upper} and \ref{cor:radius_lower}, we get the desired result.
\end{proof}

In the following, we write the LFD for $n$ samples as $P_n^*$ and its pdf as $p_n^*$. Lemma~\ref{lem:radius} establishes an asymptotic upper bound on the worst-case detection delay of the DR-CuSum test,  for a radius $r_s$ that satisfies $\underline{r}_{\delta,n} \leq r_s \leq \overline{r}_{\delta,n}$.

\begin{lemma}\label{lem:radius}
Suppose that $\mathbb{E}^P \sbrc{\big(\log (p_n^\ast (X_1) / q(X_1))\big)^2} < \infty$. Fix $\delta\in(0,1)$ and $s \in [1,2]$. Suppose that the pre- and post-change distributions $Q$, $P$ are probability measures on $\mathbb R^d$, and they both satisfy the $T_s(c)$ inequality. 
Suppose that $n \geq (N_0 (r_s^{-(d+2)} \vee 1)) \vee \underline{n}_\delta$ where $N_0$ is the same as in Theorem~\ref{thm:wass_conc} and $\underline{n}_\delta$ is defined in \eqref{eq:min_n}. Then, if the chosen radius $r_s$ satisfies
\[ \underline{r}_{\delta,n} \leq r_s \leq \overline{r}_{\delta,n},\]
it is guaranteed that with probability at least $1 - 2\delta$,
the worst-case expected detection delay of the DR-CuSum test $\tau_{\mathrm{DR}}$ with threshold $b = \log \gamma$ can be upper bounded as
\begin{equation}\label{eq:wadd_upper}
   \sup_{P\in\calP_n} \WADD^{P} (\tau_{\mathrm{DR}})  
\leq \frac{\log \gamma}{\mathsf{KL}(P^*_n||Q)} \cdot (1+o(1)) \leq \frac{2c \log \gamma}{(\wass_s(P,Q) - 2r_s)^2 } \cdot (1+o(1)), 
\end{equation}
as $\gamma \to \infty$, where $P^*_n$ is the LFD in $\calP_n$, i.e., the solution to \eqref{eq:kl_min}.
\end{lemma}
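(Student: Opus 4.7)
The plan is to split the claimed double inequality into two pieces and to handle them with independent tools. The first inequality
$$\sup_{P\in\calP_n}\WADD^{P}(\tau_{\mathrm{DR}})\leq \frac{\log\gamma}{\mathsf{KL}(P_n^\ast\|Q)}(1+o(1))$$
is essentially an asymptotic minimax‐robustness statement, while the second inequality
$$\frac{\log\gamma}{\mathsf{KL}(P_n^\ast\|Q)}\leq \frac{2c\log\gamma}{(\wass_s(P,Q)-2r_s)^2}(1+o(1))$$
is a purely deterministic consequence of the $T_s(c)$ inequality and the triangle inequality for $\wass_s$, once we condition on the ``good event'' $\{P\in\calP_n\}\cap\{Q\notin\calP_n\}$ from Lemma~\ref{lem:good_prob}, which occurs with probability at least $1-2\delta$.

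For the first inequality I would argue as follows. The Wasserstein ball $\calP_n$ is convex (the map $P\mapsto\wass_s(P,\hatP_n)$ is convex, so the sublevel set $\{\wass_s(\cdot,\hatP_n)\leq r_s\}$ is convex), and on the good event $Q\notin\calP_n$, so Lemma~\ref{lemma:wsb} applies and yields that $(\{Q\},\calP_n)$ is weakly stochastically bounded by $(Q,P_n^\ast)$, with $P_n^\ast$ the KL minimizer over $\calP_n$ characterized in Theorem~\ref{thm:lfd}. The moment assumption $\mathbb{E}^P[(\log(p_n^\ast/q))^2]<\infty$ together with $P\in\calP_n$ (again on the good event) supplies the integrability hypothesis needed to invoke Theorem~3 of \cite{Molloy2017}, giving asymptotic minimax robustness of $\tau_{\mathrm{DR}}$ at threshold $b=\log\gamma$. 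This directly produces the bound in terms of $\mathsf{KL}(P_n^\ast\|Q)$.

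For the second inequality I would chain the $T_s(c)$ inequality with a triangle inequality. Since the LFD $P_n^\ast$ has density $p_n^\ast\propto q\,e^{-C_{\lambda^\ast,u^\ast}}$, it is absolutely continuous with respect to $Q$, so applying the $T_s(c)$ inequality enjoyed by $Q$ (Definition~\ref{def:T_ineq}) with the test measure $P_n^\ast$ gives
$$\wass_s(Q,P_n^\ast)\leq \sqrt{2c\,\mathsf{KL}(P_n^\ast\|Q)}.$$
Now, on the good event, both $P$ and $P_n^\ast$ lie in the Wasserstein ball $\calP_n$ of radius $r_s$ around $\hatP_n$, so by the triangle inequality $\wass_s(P,P_n^\ast)\leq 2r_s$, and hence another triangle inequality yields
$$\wass_s(Q,P_n^\ast)\geq \wass_s(P,Q)-\wass_s(P,P_n^\ast)\geq \wass_s(P,Q)-2r_s,$$
which is strictly positive by our upper bound on $r_s$ (Corollary~\ref{cor:radius_upper} ensures $r_s\leq \overline r_{\delta,n}<\wass_s(P,Q)$). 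Squaring and substituting into the $T_s(c)$ bound gives $\mathsf{KL}(P_n^\ast\|Q)\geq (\wass_s(P,Q)-2r_s)^2/(2c)$, and inverting this inequality yields the second piece.

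The main obstacle I anticipate is not an analytic one but rather a careful bookkeeping step: verifying that Theorem~3 of \cite{Molloy2017} is applicable with the \emph{data-dependent} LFD $P_n^\ast$, which itself is a random measure since it is built from $\hatP_n$. This is handled by working entirely on the good event of Lemma~\ref{lem:good_prob} (probability $\geq 1-2\delta$), where $P_n^\ast$ is well-defined, $Q\notin\calP_n$ guarantees $\mathsf{KL}(P_n^\ast\|Q)>0$, and $P\in\calP_n$ makes the WSB conclusion cover the true post-change distribution. Combining the two inequalities then yields \eqref{eq:wadd_upper}.
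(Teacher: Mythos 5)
Your proposal is correct and follows the same overall architecture as the paper's proof: condition on the good event of Lemma~\ref{lem:good_prob}, split the claim into the two displayed inequalities, and prove the second one by chaining triangle inequalities for $\wass_s$ (using that both $P$ and $P_n^\ast$ lie within $r_s$ of $\hatP_n$) with the $T_s(c)$ inequality for $Q$ applied to $P_n^\ast\ll Q$; that half of your argument is essentially verbatim what the paper does. Where you diverge is the first inequality: you outsource it to Theorem~3 of \cite{Molloy2017}, whereas the paper gives a self-contained argument --- weak stochastic boundedness gives $\mathbb E^P[\log(p_n^\ast/q)]\geq\mathsf{KL}(P_n^\ast\|Q)>0$ for every $P\in\calP_n$, then Wald's identity yields $\mathbb E^P[\tau_{\mathrm{DR}}]\leq\mathbb E^P[S_{\tau_{\mathrm{DR}}}]/\mathsf{KL}(P_n^\ast\|Q)$, the second-moment assumption controls the overshoot so that $\mathbb E^P[S_{\tau_{\mathrm{DR}}}]=\log\gamma\,(1+o(1))$, and finally $\WADD^P(\tau_{\mathrm{DR}})\leq\mathbb E_1^P[\tau_{\mathrm{DR}}]$ by the recursive nonnegative structure of the statistic. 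Your citation route is legitimate, but note one bookkeeping point: asymptotic minimax robustness alone (worst-case delay equal to the minimax value up to $1+o(1)$) does not by itself produce the explicit bound $\log\gamma/\mathsf{KL}(P_n^\ast\|Q)$; you need the part of the cited theorem that identifies the asymptotic worst-case delay with $b/\mathsf{KL}(P_1^\ast\|P_0^\ast)$, and you should check that its integrability hypotheses are phrased so as to cover every $P$ in the (random) class $\calP_n$, which is exactly the uniformity the paper's Wald-identity argument makes transparent. The paper's direct route buys a cleaner handle on this uniformity and on the role of the second-moment assumption; your route is shorter and reuses machinery the paper has already invoked in Section~2.
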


\begin{proof}
Throughout the proof we use the result from Lemma~\ref{lem:good_prob}, i.e., under the given conditions, with probability at least $1 - 2 \delta$, we have $P \in \calP_n$ and $Q \notin \calP_n$.
To prove the first inequality in \eqref{eq:wadd_upper}, note that when $Q \notin \calP_n$ and for any $P \in \calP_n$, 
\[ \mathbb{E}^P \sbrc{\log \frac{p_n^\ast (X_1)}{q(X_1)} } = \mathsf{KL}(P||Q) - \mathsf{KL}(P||P^*_n) \stackrel{(i)}{\geq} \mathsf{KL}(P^*_n||Q) \stackrel{(ii)}{>} 0,\]
where $(i)$ follows from the WSB condition in Lemma~\ref{lemma:wsb}, and $(ii)$ follows from the fact that $Q \notin \calP_n$. Thus, by \cite[Prop.~8.21]{siegmund1985sequential}, $\mathbb{E}^P_1\sbrc{\tau_{\mathrm{DR}}} < \infty$.
By Wald's identity,
\[ \mathbb{E}^P \sbrc{\tau_{\mathrm{DR}}} = \frac{\mathbb{E}^P[S_{\tau_{\mathrm{DR}}}]}{\mathbb{E}^P \sbrc{\log \frac{p_n^\ast (X_1)}{q(X_1)}}} \leq \frac{\mathbb{E}^P[S_{\tau_{\mathrm{DR}}}]}{\mathsf{KL}(P^*_n||Q)} ,\quad \forall P \in \calP_n. \]
Since by assumption $\mathbb{E}^P \sbrc{\log \frac{p_n^\ast (X_1)}{q(X_1)}}^2 < \infty$, following classical renewal analysis (e.g., see \cite{siegmund1985sequential}), we have
\[ \mathbb{E}^P[S_{\tau_{\mathrm{DR}}}] = \log \gamma + \mathbb{E}^P[S_{\tau_{\mathrm{DR}}} - \log \gamma] = \log \gamma \cdot (1 + o(1)),\quad \text{as}~\gamma \to \infty.\]
Finally, we note that $\WADD^{P} (\tau_{\mathrm{DR}}) \leq \mathbb{E}^P_1 \sbrc{\tau_{\mathrm{DR}}},~\forall P \in \calP_n$, since $S_k \geq 0$ for all $k \geq 1$ almost surely and $S_k$ has a recursive update structure. This completes the proof of the first inequality.

For the second inequality in \eqref{eq:wadd_upper}, due to the triangle inequality for the Wasserstein metric, we have 
\begin{equation}\label{eq:was_ineq1}
\wass_s(\hatP_n,Q) \leq \wass_s(\hatP_n,P_n^*) +  \wass_s(P_n^*,Q) \stackrel{(iii)}{\leq}  r_s + \wass_s(P_n^*,Q),   
\end{equation}
where $(iii)$ is due to $P_n^*\in\calP_n$, and thus $ \wass_s(\hatP_n,P_n^*) \leq r_s$.
On the other hand, under the event $P \in \calP_n$, we have $\wass_s(\hatP_n,P) \leq r_s$, which implies 
\begin{equation}\label{eq:was_ineq2}
\wass_s(\hatP_n,Q) \geq \wass_s(P,Q)  - \wass_s(\hatP_n,P) \geq \wass_s(P,Q) - r_s.
\end{equation}
Combining \eqref{eq:was_ineq1} and \eqref{eq:was_ineq2} we have 
\begin{equation}\label{eq:was_ineq3}
\wass_s(P_n^*,Q) \geq \wass_s(\hatP_n,Q) - r_s \geq \wass_s(P,Q) - 2 r_s. 
\end{equation}
Further, since $Q$ satisfies the $T_s(c)$ inequality, we have
\[ \frac{(\wass_s(P_n^*,Q))^2}{2c} \leq \mathsf{KL}(P_n^*||Q). 
\]
Combining this with \eqref{eq:was_ineq3}, we obtain
\[ \mathsf{KL}(P_n^*||Q) \geq \frac{(\wass_s(P,Q) - 2r_s)^2}{2c}. \]
Substituting this into the first inequality in \eqref{eq:wadd_upper} completes the proof.
\end{proof}

\begin{example}
For the special case, where the pre- and post-change distributions are Gaussian, with $Q=N(\mu_0,1)$ and $P=N(\mu_1,1)$, we have $\mathsf{KL}(P || Q) = \frac12(\mu_1-\mu_0)^2 = \frac12 \wass_2^2(Q,P)$, and the $T_2(1)$ inequality holds equality. This means that the delay of the DR-CuSum procedure is, with probability at least $1-2\delta$, bounded from above as
\begin{align*}
\frac{\log \gamma}{\mathsf{KL}(P_n^\ast || Q)} (1+o(1)) & \leq \frac{\log \gamma}{\frac{(\wass_2(P,Q) - 2r_2)^2}{2} } (1+o(1)) \\
& = \frac{\log \gamma}{\mathsf{KL}(P || Q) -2r_2 \sqrt{2\mathsf{KL}(P || Q)} + 2r_2^2 } (1+o(1)).
\end{align*}
From Corollary\,\ref{cor:radius_lower}, we may choose the radius as its lower bound with $r_2 = \sqrt{2|\log\delta|c'/(\gamma_2 n)} = O(n^{-1/2})$, which means that for $n$ sufficiently large, we have that the delay of DR-CuSum test will match the optimal delay, $[(\log\gamma)/\mathsf{KL}(P || Q)]\cdot(1+o(1))$, asymptotically.
\end{example}

\section{Minimax Robust QCD under Wasserstein Uncertainty Sets: Multiple Post-Change Uncertainty Sets} \label{sec:multiple_pcs}

We extend the results of Section~\ref{sec:single_pcs} to the more general case with multiple post-change scenarios as follows. 
Suppose there are $M\geq 1$ potential post-change scenarios in total, and we have a set of training samples $\{\omega_1^{(m)},\ldots,\omega_{n_m}^{(m)}\}$ that are independently sampled from the $m$-th post-change scenario, with $\hatP_{n_m}^{(m)}$ being the corresponding empirical distribution.
The uncertainty set $\calP_{n_m}^{(m)}$ which corresponds to the $m$-th post-change scenario, similar to \eqref{eq:uncertainty}, is now defined as
\begin{equation}\label{eq:uncertainty_multi}
\begin{aligned}
\calP_{n_m}^{(m)} &:= \{P\in \scrP_s: \wass_s(P,\hatP_{n_m}^{(m)}) \leq r_{s,m} \}, \quad m =1,\ldots,M,
\end{aligned}
\end{equation}
where $r_{s,m} \geq 0$ is the radius parameter controlling the size of the $m$-th uncertainty set. With a slight abuse of notation, we define $\calP := \cup_{m=1}^M \calP_{n_m}^{(m)}$ as the union of all the uncertainty sets in the remainder of this section.

\subsection{Asymptotically Optimal Stopping Time}

Based on Theorem\,\ref{thm:lfd}, we can find $M$ LFDs,  denoted, with a slight abuse of notation, as $P_{(1)}^*,\ldots,P_{(M)}^*$, one for each Wasserstein uncertainty set. The LFD $P_{(m)}^\ast$ for the $m$-th uncertainty set is an exponential tilting of $Q$ and has pdf $p_{(m)}^*(x) = q(x)\exp\{-C_{\lambda_m^*,u_m^*}^{(m)}(x)-\eta^{(m)}(\lambda_m^*,u_m^*) \}$, where $\lambda_m^*,u_m^*$ are the solution to
\[
\sup_{\substack{\lambda\geq 0, u \in \mathbb R^{n_m}}} \bigg\{ -\lambda r_{s,m}^s +  \frac{1}{n_m}\sum_{j=1}^{n_m} u_j  -\log \eta^{(m)}(\lambda,u) \bigg\},   
\]
where $
C^{(m)}_{\lambda,u}(x):= \min_{1\leq j\leq n_m} \{\lambda c^s(x,\omega_j^{(m)}) - u_j\}$ and 
\[
\eta^{(m)}(\lambda,u) :=\int  q(x) \exp\{-C^{(m)}_{\lambda,u}(x)\}d\mu(x).
\]
The log-likelihood ratio under scenario $m$ equals
\[
\log \frac{p_{(m)}^\ast(x)}{q(x)}  = -C^{(m)}_{\lambda_m^*,u_m^*}(x)-\log \eta^{(m)}(\lambda_m^*,u_m^*).
\]
Given online samples $\{X_k,\ k\in \bN \}$, the detection statistic for the $m$-th uncertainty set can be computed recursively as
\begin{equation}\label{eq:CuSum}
S_k^{(m)} = (S_{k-1}^{(m)})^{+} + \log \frac{p_{(m)}^*(X_k)}{q(X_k)},\quad S_0^{(m)} = 0,\quad \forall m=1\dots,M.
\end{equation}
The \textit{Distributionally Robust (DR) CuSum} stopping time under multiple post-change scenarios is 
then defined as
\begin{equation}\label{eq:CuSum_stop_multi}
\tau_{\mathrm{DR}}(b) := \inf\left\{k \in \bN:  \max_{m=1,\ldots,M} S_k^{(m)} \geq b \right\},
\end{equation}
where $b$ is chosen to meet the false alarm constraint.
In Lemma~\ref{lemma:fa} and Theorem~\ref{thm:opt}, we investigate the asymptotic optimality properties of this DR-CuSum test. The proofs of these results are provided in the Appendix.

\begin{lemma} \label{lemma:fa}
The mean time to false alarm of the test in  \eqref{eq:CuSum_stop_multi} satisfies 
\[
\mathbb E_\infty[\tau_{\mathrm{DR}}(b)] \geq \frac{e^b}{M}.
\]
\end{lemma}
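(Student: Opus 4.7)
My plan is to reduce the false-alarm bound to an optional stopping argument applied to the aggregate exponential process $V_k := \sum_{m=1}^{M} e^{S_{k}^{(m)}}$. The crucial observation is that for each $m$, the density $p_{(m)}^{\ast}$ is a probability density with respect to $\mu$, so under $\bP_{\infty}$ the increment $e^{Z_{k}^{(m)}} := p_{(m)}^{\ast}(X_k)/q(X_k)$ satisfies $\bE_{\infty}[e^{Z_{k}^{(m)}} \mid \cF_{k-1}] = 1$. Combining this with the CuSum recursion in \eqref{eq:CuSum} and the elementary inequality $e^{x^+} = \max(1, e^{x}) \leq 1 + e^{x}$ gives
\[ \bE_{\infty}\sbrc{e^{S_{k}^{(m)}} \mid \cF_{k-1}} = e^{(S_{k-1}^{(m)})^{+}} \leq 1 + e^{S_{k-1}^{(m)}}. \]
Summing over $m = 1, \dots, M$ yields $\bE_{\infty}[V_{k} \mid \cF_{k-1}] \leq V_{k-1} + M$, so $V_{k} - Mk$ is a $\bP_{\infty}$-supermartingale with $V_{0} = M$.

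Next I would apply the optional stopping theorem at the bounded stopping time $\tau_{\mathrm{DR}}(b) \wedge N$ to obtain
\[ \bE_{\infty}\sbrc{V_{\tau_{\mathrm{DR}}(b) \wedge N}} \leq M\brc{1 + \bE_{\infty}[\tau_{\mathrm{DR}}(b) \wedge N]}. \]
By the definition \eqref{eq:CuSum_stop_multi}, whenever $\tau_{\mathrm{DR}}(b) \leq N$ some $S_{\tau_{\mathrm{DR}}(b)}^{(m)} \geq b$, so $V_{\tau_{\mathrm{DR}}(b)} \geq e^{b}$, and otherwise $V_{N} \geq 0$. Therefore
\[ e^{b}\, \bP_{\infty}(\tau_{\mathrm{DR}}(b) \leq N) \leq M\brc{1 + \bE_{\infty}[\tau_{\mathrm{DR}}(b) \wedge N]}. \]
Sending $N \to \infty$ and using monotone convergence (treating $\bE_{\infty}[\tau_{\mathrm{DR}}(b)] = \infty$ as a trivial case) then delivers the stated bound after rearranging.

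The main obstacle in this plan is the passage to the limit $N \to \infty$: since $V_{k} - Mk$ need not be uniformly integrable, one cannot apply optional stopping directly at $\tau_{\mathrm{DR}}(b)$. Truncating at $\tau_{\mathrm{DR}}(b) \wedge N$ resolves this, because both sides of the intermediate inequality are monotone in $N$, which legitimizes the limit. A minor secondary check is the integrability of $V_{k}$ at each fixed $k$, which follows because each $e^{S_{k}^{(m)}}$ has expectation bounded by $k+1$ under $\bP_{\infty}$ (obtained by iterating the conditional identity above).
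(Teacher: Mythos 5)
Your overall strategy---an optional-stopping argument on a nonnegative process built from the likelihood ratios $e^{Z_k^{(m)}} = p_{(m)}^\ast(X_k)/q(X_k)$, whose conditional $\mathbb{P}_\infty$-expectation is $1$---is the same idea the paper uses. But there is a concrete quantitative gap: your argument does not deliver the stated bound $e^b/M$, only $e^b/M - 1$. Tracing your inequalities: $V_k - Mk$ is a supermartingale with $V_0 = M$, so optional stopping at $\tau_{\mathrm{DR}}(b)\wedge N$ gives $\mathbb{E}_\infty[V_{\tau_{\mathrm{DR}}(b)\wedge N}] \leq M\brc{1+\mathbb{E}_\infty[\tau_{\mathrm{DR}}(b)\wedge N]}$, and combining with $V_{\tau_{\mathrm{DR}}(b)} \geq e^b$ and letting $N\to\infty$ yields $e^b \leq M\brc{1+\mathbb{E}_\infty[\tau_{\mathrm{DR}}(b)]}$, i.e.\ $\mathbb{E}_\infty[\tau_{\mathrm{DR}}(b)] \geq e^b/M - 1$. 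The ``$+M$'' on the right-hand side (equivalently, the $-1$ in the final bound) cannot be rearranged away: it comes from the initial value $V_0=M$ together with the fact that $e^{(S)^+}\le 1+e^{S}$ only makes $V_k - Mk$ a supermartingale, not a martingale started at $0$. This loss is harmless for the asymptotics in Theorem~\ref{thm:opt}, but it does not prove the lemma as stated.

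The paper avoids the loss by comparing $\tau_{\mathrm{DR}}(b)$ with a Shiryaev--Roberts-type stopping time: it sets $R_k^{(m)} := \sum_{n=1}^k \prod_{j=n}^k e^{Z_j^{(m)}}$ and $R_k := \sum_{m=1}^M R_k^{(m)}$, which satisfies the \emph{exact} identity $\mathbb{E}_\infty[R_k\mid\cF_{k-1}] = M + R_{k-1}$ with $R_0 = 0$, so $R_k - Mk$ is a true martingale started at $0$ and optional stopping gives the equality $\mathbb{E}_\infty[R_{\tau_b^R}] = M\,\mathbb{E}_\infty[\tau_b^R]$. Since $R_k \geq \max_m e^{S_k^{(m)}}$, the SR stopping time $\tau_b^R$ satisfies $\tau_b^R \leq \tau_{\mathrm{DR}}(b)$ and $R_{\tau_b^R}\geq e^b$, which yields the clean bound $\mathbb{E}_\infty[\tau_{\mathrm{DR}}(b)] \geq \mathbb{E}_\infty[\tau_b^R] \geq e^b/M$ with no additive correction. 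To repair your proof, replace $V_k = \sum_m e^{S_k^{(m)}}$ by $R_k$ (your truncation-at-$N$ device and integrability checks carry over, and are in fact a reasonable way to justify the optional-stopping step, which the paper handles by a bounded-increments argument on the event $\{\tau_b^R > k\}$).
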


\begin{theorem}[Asymptotic Minimax Robustness]\label{thm:opt}
Write 
\[
I^* := \min_{m=1,2,\ldots,M} \mathsf{KL}(P_{(m)}^*||Q).
\]
Then, the test in \eqref{eq:CuSum_stop_multi} with threshold $b_\gamma=\log (M \gamma)$ solves the problem in \eqref{eq:Lorden} 
asymptotically as $\gamma \rightarrow \infty$, with the asymptotic worst-case delay being
\begin{align*}
   \sup_{P\in\calP} \WADD^{P} (\tau_{\mathrm{DR}}(b_\gamma))  &=  \inf_{\tau' \in C(\gamma)} \sup_{P\in\calP} \WADD^{P} (\tau') \cdot (1+o(1)) \\
   &
     = \frac{\log \gamma}{I^*} \cdot (1+o(1)).
\end{align*}
\end{theorem}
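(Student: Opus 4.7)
My plan is to prove both equalities by establishing matching asymptotic upper and lower bounds, each equal to $(\log\gamma)/I^\ast\cdot(1+o(1))$. The key structural observation is the decomposition $\tau_{\mathrm{DR}}(b) = \min_{1\leq m\leq M}\tau^{(m)}(b)$, where $\tau^{(m)}(b) := \inf\{k\in\bN: S_k^{(m)}\geq b\}$ is the individual CuSum test based on the $m$-th LFD. The false alarm constraint is immediate from Lemma~\ref{lemma:fa}: with $b_\gamma = \log(M\gamma)$, we have $\mathbb{E}_\infty[\tau_{\mathrm{DR}}(b_\gamma)] \geq e^{b_\gamma}/M = \gamma$, so $\tau_{\mathrm{DR}}(b_\gamma) \in C(\gamma)$.

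For the achievability (upper bound on the DR test), I would fix any $P\in\calP$ and pick some $m$ with $P\in\calP_{n_m}^{(m)}$. Applying the WSB condition of Lemma~\ref{lemma:wsb} to the convex set $\calP_{n_m}^{(m)}$ with LFD $P_{(m)}^\ast$ yields
\[\mathbb{E}^P\sbrc{\log\frac{p_{(m)}^\ast(X_1)}{q(X_1)}} = \mathsf{KL}(P||Q) - \mathsf{KL}(P||P_{(m)}^\ast) \geq \mathsf{KL}(P_{(m)}^\ast||Q) > 0,\]
so $S_k^{(m)}$ has positive drift under $P$. Wald's identity plus renewal overshoot analysis (exactly as in the proof of Lemma~\ref{lem:radius}, under the usual finite-second-moment condition on the log-likelihood ratio) then give $\WADD^P(\tau^{(m)}(b_\gamma)) \leq b_\gamma/\mathsf{KL}(P_{(m)}^\ast||Q)\cdot(1+o(1))$. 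Because $\tau_{\mathrm{DR}}(b_\gamma) \leq \tau^{(m)}(b_\gamma)$ path-wise, the same upper bound carries over to $\WADD^P(\tau_{\mathrm{DR}}(b_\gamma))$. Taking the supremum over $P\in\calP$ picks out the worst case across scenarios:
\[\sup_{P\in\calP}\WADD^P(\tau_{\mathrm{DR}}(b_\gamma)) \leq \frac{b_\gamma}{I^\ast}\cdot(1+o(1)) = \frac{\log\gamma}{I^\ast}\cdot(1+o(1)),\]
using $b_\gamma = \log\gamma + \log M$ together with $\log M = o(\log\gamma)$ for $M$ fixed in $\gamma$.

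For the converse, I would appeal to Lorden's universal asymptotic lower bound (see, e.g., Lai 1998): for any $\tau\in C(\gamma)$ and any $P$ with $\mathsf{KL}(P||Q)\in(0,\infty)$, $\WADD^P(\tau) \geq (\log\gamma)/\mathsf{KL}(P||Q)\cdot(1+o(1))$. Choosing $P = P_{(m^\ast)}^\ast$ with $m^\ast \in \argmin_m\mathsf{KL}(P_{(m)}^\ast||Q)$, which lies in $\calP_{n_{m^\ast}}^{(m^\ast)} \subset \calP$, I obtain $\inf_{\tau\in C(\gamma)}\sup_{P\in\calP}\WADD^P(\tau) \geq (\log\gamma)/I^\ast\cdot(1+o(1))$. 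Combined with the achievability bound and the trivial sandwich $\inf_{\tau\in C(\gamma)}\sup_{P}\WADD^P(\tau) \leq \sup_{P}\WADD^P(\tau_{\mathrm{DR}}(b_\gamma))$, both equalities claimed in the theorem follow. The main subtlety is the threshold inflation in the achievability step: since the combined test fires as soon as any one of $M$ statistics crosses $b$, the false alarm rate inflates by up to a factor of $M$, forcing $b_\gamma = \log(M\gamma)$; for $M$ fixed in $\gamma$ the extra $\log M$ is absorbed into the $(1+o(1))$ factor, which is precisely why the union-of-uncertainty-sets structure does not spoil asymptotic optimality.
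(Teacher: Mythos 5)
Your proposal is correct and its skeleton matches the paper's: the false alarm constraint via Lemma~\ref{lemma:fa}, the converse via the universal asymptotic lower bound of Lai (1998) evaluated at the distribution achieving $I^\ast$ (the paper phrases this as $\inf_\tau \max_i \sup_{P_i} \geq \max_i \sup_{P_i} \inf_\tau$ and then invokes \cite[Thm.~1]{lai-ieeetit-1998}, which amounts to the same thing as plugging in $P_{(m^\ast)}^\ast$), and the achievability driven by the weak stochastic boundedness inequality $\mathbb{E}^{P}[\log(p_{(m)}^\ast/q)] \geq \mathsf{KL}(P_{(m)}^\ast\|Q)$. Where you differ is in how the delay upper bound is extracted from that drift inequality: the paper applies \cite[Thm.~4(ii)]{lai-ieeetit-1998} directly to the max-of-statistics stopping rule, whereas you decompose $\tau_{\mathrm{DR}}(b)=\min_m \tau^{(m)}(b)$ and run the Wald-identity/renewal argument of Lemma~\ref{lem:radius} on each component separately, then use $\tau_{\mathrm{DR}}\leq\tau^{(m)}$ pathwise. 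Your route is more self-contained and elementary, but it requires the finite-second-moment condition on the log-likelihood ratio that you flag (and, strictly, uniformity of the overshoot term over $P\in\calP_{n_m}^{(m)}$ for the $(1+o(1))$ to hold after taking the supremum --- the same implicit uniformity the paper relies on in Lemma~\ref{lem:radius}); the paper's citation of Lai's Theorem~4(ii) sidesteps the explicit second-moment hypothesis at the cost of being a black box. Both treatments handle the $\log M$ threshold inflation identically.
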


\section{Numerical Results}\label{sec:num_res}

\subsection{Synthetic Data Examples}

We validate the performance of the DR-CuSum test \eqref{eq:CuSum_stop_multi} through simulations for an example in which the observations come from Gaussian distributions. We use the cost function $c(x,x') = \norm{x-x'}_2$ and order $s=2$ for calculating the  Wasserstein distance. The true pre- and post-change distributions are $\mathcal N(0,1)$ and $\mathcal N(0.5,1)$, respectively. Therefore, the Wasserstein distance between the true pre- and post-change distribution is $\wass_2(\mathcal{N}(0,1),\mathcal{N}(0.5,1)) = (0.5-0)^2 = 0.25$.


\paragraph{Comparison with CuSum Type Tests and Effect of Radius:} 

We simulate the case of a single post-change scenario ($M=1$). 
We first compare the performances for the following three tests that all have a recursive structure that facilitates implementation:
\begin{enumerate}
    \item The CuSum test for the case where both the pre- and post-change distributions are known. This is the optimal procedure and provides us with an information-theoretic lower bound (benchmark) for the WADD.
    \item The CuSum test that has knowledge of the Gaussian post-change model, and uses the training data to produce a MLE of the post-change mean {\it and} variance. 
    \item The proposed DR-CuSum test defined in \eqref{eq:CuSum_stop_multi}, with different choices of radius.
\end{enumerate}
Since all four tests use statistics with a recursive CuSum structure and the observations are independent, the worst-case value of the change-point for computing the WADD is $\nu=1$ for all tests. This allows us to estimate the worst-case delays of the tests by simulating the post-change distribution from time 1. Furthermore, the recursive nature of these four tests implies that they have similar computational complexities during the detection phase.

\begin{figure}[htbp!]
    \centering
    \includegraphics[width=0.8\linewidth]{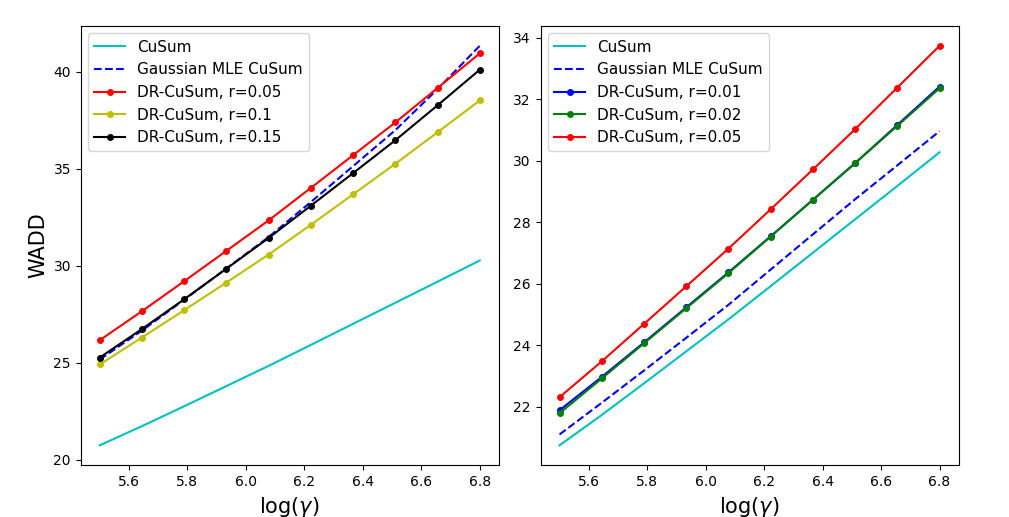}
    \caption{Performances of tests with different statistics, averaged over 30 sets of different training samples. The left plot corresponds to the case with $n=25$ post-change training samples, and the right plot corresponds to that with $n=150$. The tests shown are regular CuSum (cyan), CuSum with Gaussian MLE (blue dashes), and DR-CuSum with various radii.}
    \label{fig:gauss_perf} 
\end{figure}
\begin{figure}[htbp!]
    \centering
    \includegraphics[width=0.6\linewidth]{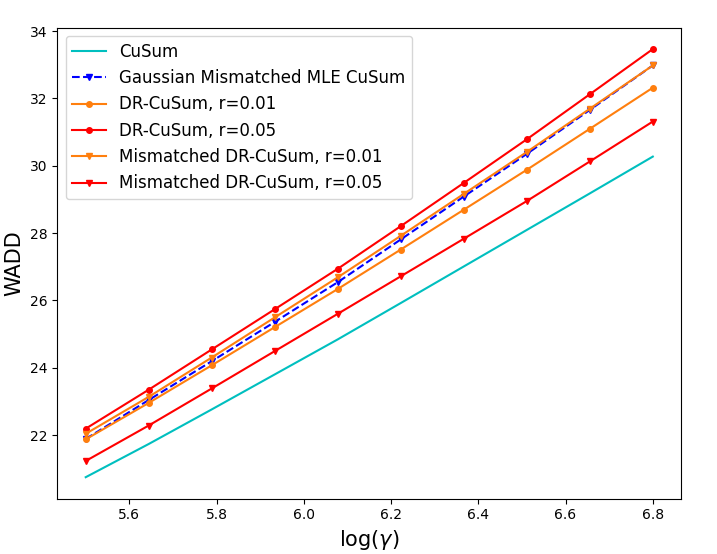}
    \caption{Performances of tests with different mismatched statistics, averaged over 40 sets of $n=150$ post-change training samples. The samples are generated from $\mathcal N(0.75,1)$, which is different from the true post-change distribution $\mathcal N(0.5,1)$. The tests shown are regular CuSum (cyan), mismatched CuSum with Gaussian MLE (blue dashes), and both matched (in circle markers) and mismatched (in triangle markers) DR-CuSum tests with two radii selected.}
    \label{fig:gauss_mismatch_perf}
\end{figure}

In Fig.~\ref{fig:gauss_perf}, we numerically study the effect of radius under two conditions for the number of post-change training samples \textit{a priori}: when the sample size is small ($n=25$) and when the sample size is large ($n=150$).
We see in Fig.~\ref{fig:gauss_perf} that the choice of radius in DR-CuSum is important for its performance. When the number of training samples is small, the DR-CuSum test outperforms the Gaussian MLE CuSum test with several choices of radii. We emphasize that, unlike the latter test, the DR-CuSum test does {\it not} assume any knowledge of the parametric model for the post-change distribution. This highlights the effectiveness of the DR-CuSum test in dealing with distributional uncertainty, especially in data-driven and non-parametric settings.


In Fig.~\ref{fig:gauss_mismatch_perf}, we numerically study the effect of radius when the number of mismatched post-change samples is plenty ($n=150$). The empirical samples are drawn from a mismatched Gaussian distribution: $\mathcal N(0.75,1)$, while the true post-change distribution for test sequences is still $\mathcal N(0.5,1)$.
We see in Fig.~\ref{fig:gauss_mismatch_perf} that the model mismatch causes a non-trivial effect on the optimal radius selection, where the DR-CuSum test with a larger radius is more robust under distributional mismatch. Also, with a proper choice of radius, we see that the mismatched DR-CuSum test outperforms the mismatched Gaussian MLE test, which, we again emphasize, knows the parametric model for the post-change distribution. This highlights the effectiveness of DR-CuSum test in dealing with training data mismatch, which is common in data-driven applications.
More results for the comparison of different radii can be found in Appendix~\ref{sec:numerical_appendix}.

\paragraph{Effect of Sample Size:}

\begin{figure}[htbp!]
    \centering
    \includegraphics[width=0.6\linewidth]{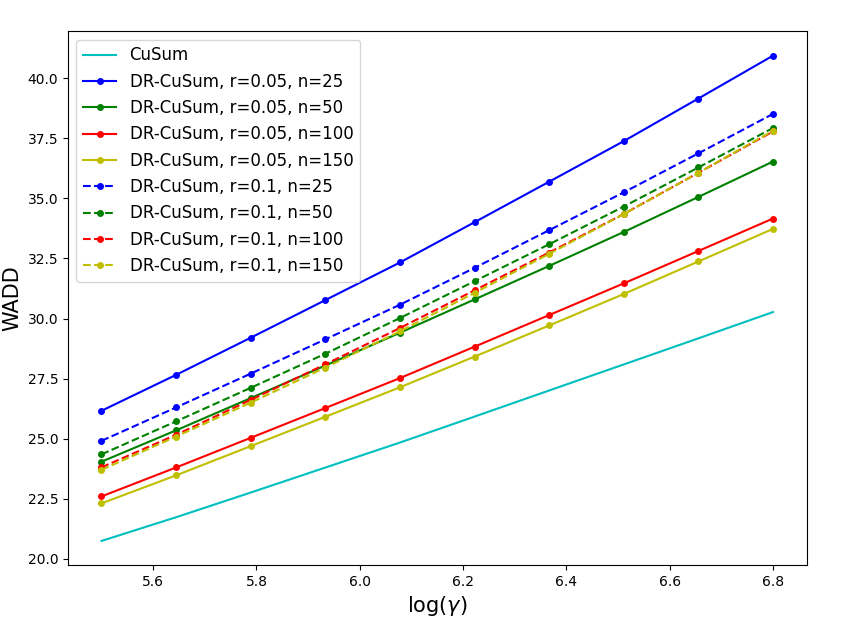}
    \caption{Performances of DR-CuSum tests with different numbers of post-change training samples, averaged over 30 different training sets. The tests shown are regular CuSum (cyan) and the DR-CuSum with radii $r=0.05$ (solid lines) and $r=0.1$ (dashed lines) at varying post-change training sample sizes.}
    \label{fig:gauss_radius} 
\end{figure}

We see in Fig.~\ref{fig:gauss_radius} that an increase in the number of training samples is always beneficial to the performance of DR-CuSum. Also, this benefit is greater when the choice of radius is small. Therefore, the optimal radius needs to be smaller when the number of post-change training samples becomes larger.

\paragraph{Effect of $M$:}

\begin{figure}[ht!]
    \centering
    \includegraphics[width=0.6\linewidth]{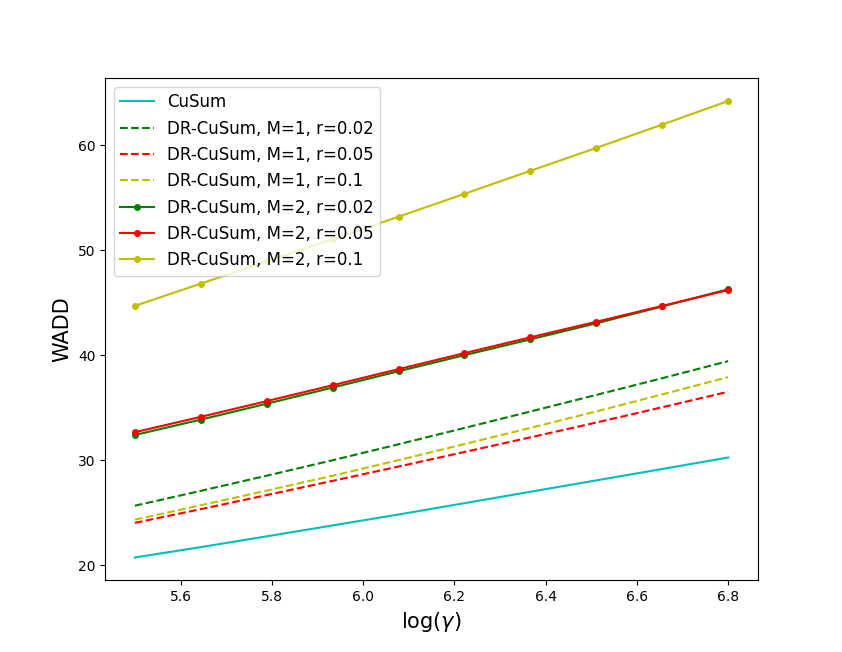}
    \caption{Performances of DR-CuSum tests with different numbers of scenarios, averaged over 30 different training sets. The tests shown are regular CuSum (cyan), the DR-CuSum test that knows the true scenario (denoted as $M=1$), and the multi-scenario DR-CuSum test ($M=2$). $n=50$. The DR-CuSum tests are studied at different radii.}
    \label{fig:gauss_multi} 
\end{figure}

We also study the performance of DR-CuSum under multiple post-change scenarios. The total number of scenarios is $M=2$, and each scenario is given $n_1 = n_2 =50$ training samples. We see in Fig.~\ref{fig:gauss_multi} that the uncertainty in the true scenario results in some performance loss for the DR-CuSum test. Also, the optimal choice of radius depends on the value of $M$.

\paragraph{Comparison with NGLR-CuSum test:}


When the post-change training sample size is small, we compare the performance of the DR-CuSum test with that of the NGLR-CuSum test \citep{liang2023quickest}. Since the NGLR-CuSum test in \cite{liang2023quickest} does not use any post-change training samples, we define a modified version that uses the post-change training samples.
The change-point $\nu = 1$, and note that, different from the DR-CuSum test, this choice does not guarantee a worst-case delay for the NGLR-CuSum test.


\begin{figure}[htbp!]
    \centering
    \begin{tabular}{cc}
      \includegraphics[width=0.6\textwidth]{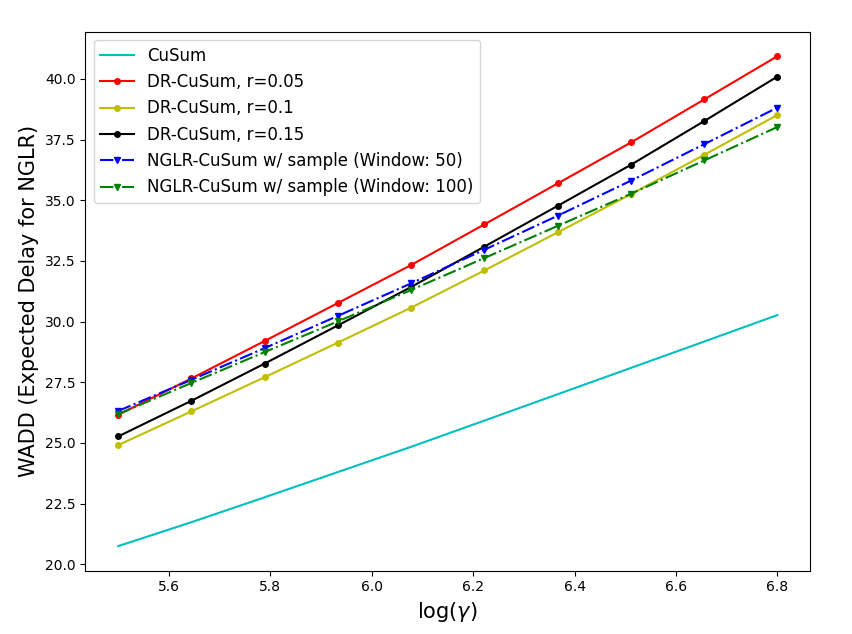}  
\end{tabular}
    \caption{Comparison of DR-CuSum tests (solid lines) with the modified NGLR-CuSum tests (dashed lines) defined in \eqref{eq:def_nglr_with_sample}. The number of post-change training samples $n = 25$. The KDE with a Gaussian kernel is used in the NGLR-CuSum test, with the bandwidth parameter $h = 50^{-0.2}$. All tests are first evaluated on the same set of post-change training samples, and then the average performance over 30 different sets of training samples is reported. }
    \label{fig:gauss_vs_loo}
\end{figure}

The NGLR-CuSum test in \cite{liang2023quickest} is defined as
\begin{equation} \label{eq:def_nglr}
    \tau_{\rm{NGLR}}(b) := \inf \left\{k \geq 1:\max_{(k-W)^+ < \ell \leq k} \sum_{j=\ell}^k \log \frac{\widehat{p}^{k,\ell}_{-j}(X_j)}{q(X_j)} \geq b \right\},
\end{equation}
where $W$ is the window size, and if we assume using a kernel density estimator (KDE) with some kernel function $K(\cdot)$ and bandwidth $h$ \citep{wasserman2006all-nonpara-stat}, the leave-one-out density estimate is
\[ \widehat{p}^{k,\ell}_{-j}(X_j) := \frac{1}{(k-\ell) h} \sum_{\substack{i=\ell \\ i \neq j}}^{k} K\left(\frac{X_i-X_j}{h}\right),\quad \forall j \in [\ell,k]. \]
To utilize the post-change training samples, we similarly define the modified NGLR-CuSum test as:
\begin{equation} \label{eq:def_nglr_with_sample}
    \tau_{\rm{NGLRws}}(b) := \inf \left\{k \geq 1:\max_{(k-W)^+ < \ell \leq k} \brc{\sum_{j=\ell}^k \log \frac{\widehat{p}^{k,\ell,\omega}_{-j}(X_j)}{q(X_j)} + \sum_{i=1}^n \log \frac{\widehat{p}^{k,\ell,\omega}_{-j}(\omega_i)}{q(\omega_i)}}\geq b \right\},
\end{equation}
where
\[ \widehat{p}^{k,\ell,\omega}_{-j}(X_j) := \frac{1}{(k-\ell+n) h} \brc{ \sum_{\substack{i=\ell \\ i \neq j}}^{k} K\left(\frac{X_i-X_j}{h}\right) + \sum_{i=1}^{n} K\left(\frac{\omega_i-X_j}{h}\right) },~\forall j \in [\ell,k]. \]
In Fig.~\ref{fig:gauss_vs_loo}, we see that given the training samples, the DR-CuSum test (with the optimal radius) performs slightly worse than the modified NGLR-CuSum test. However, due to the recursive CuSum update structure, the DR-CuSum test is computationally less expensive than the latter at inference time. 


\begin{figure}[t!]
    \centering
    \begin{tabular}{cc}
      \includegraphics[width=0.6\textwidth]{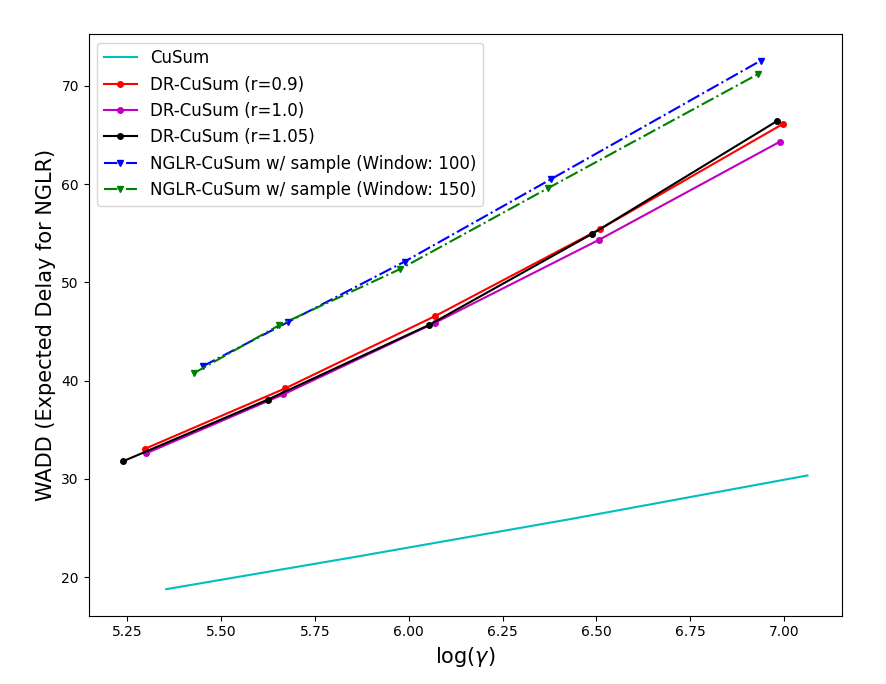}  
\end{tabular}
    \caption{Comparison of DR-CuSum tests (solid lines) with the modified NGLR-CuSum tests (dashed lines) defined in \eqref{eq:def_nglr_with_sample} with $d = 3$ and $a = 0.3$. The number of post-change training samples $n = 25$. The leave-one-out KDE with a Gaussian product kernel (defined in \eqref{eq:def_product_kernel}) is used in the NGLR-CuSum test, with the bandwidth parameter $h_m = 30^{-1/7},\forall m = 1,\dots,3$. All tests are first evaluated on the same set of post-change training samples, and then the average performance over 30 different sets of training samples is reported. }
    \label{fig:gauss_vs_loo_3d}
\end{figure}

We also compare the performance of the DR-CuSum tests with that of the NGLR-CuSum test with $d$ dimensional observations. The pre-change distribution is $\mathcal N(\bm{0},\bm{\mathrm{I}}_d)$, where $\bm{\mathrm{I}}_d$ is the identity matrix. The post-change distribution is $\mathcal N(\bm{a},\bm{\mathrm{I}}_d)$. Here $\bm{a} \in \mathbb{R}^d$ denotes a vector with all elements being $a \in \mathbb{R}$. The change-point $\nu = 1$.
We use the following product kernel in the leave-one-out density estimate. For any $j \in [\ell,k]$,
\begin{multline} \label{eq:def_product_kernel}
    \widehat{p}^{k,\ell,\omega}_{-j}(\bm{X}_j) := \frac{1}{(k-\ell+n) \prod_{m=1}^d h_{m}} \times \\
    \brc{ \sum_{\substack{i=\ell \\ i \neq j}}^{k} \prod_{m=1}^d K\left(\frac{\bm{X}^{(m)}_i-\bm{X}^{(m)}_j}{h_m}\right) + \sum_{i=1}^{n} \prod_{m=1}^d K\left(\frac{\bm{\omega}^{(m)}_i-\bm{X}^{(m)}_j}{h_m}\right) }.
\end{multline}
where $\bm{x}^{(m)}$ denotes the $m$-th element of vector $x$ and $h_m$ denotes the kernel bandwidth for the $m$-th element.

In Fig.~\ref{fig:gauss_vs_loo_3d}, we see that with 3-d observations, the DR-CuSum test (with the optimal radius) performs better than the modified NGLR-CuSum test. This is because kernel density estimation becomes less accurate in higher dimensions.
Also, it is observed that the DR-CuSum test is computationally much less expensive than the modified NGLR-CuSum test. The kernel density estimation is very computationally demanding in higher dimensions.
In comparison, while the DR-CuSum test also suffers from a more expensive offline computation, its online computational requirements only go up modestly due to the increase in dimension. 
Indeed, the DR-CuSum requires only $O(n d)$ operations to compute $C_{\lambda^*,u^*}(\bm{X}_k)$ for each new sample $\bm{X}_k$.


\subsection{Real Data Example}


We apply the proposed DR-CuSum test to a real data example of human activity detection. We use the WISDM's Actitracker activity prediction dataset \citep{lockhart2011design}. This dataset contains the activity data collected from 225 users under six different activities, including ``Walking'', ``Jogging'', ``Stairs'', ``Sitting'', ``Standing'', and ``LyingDown''. The attribute at each time is a three-dimensional vector containing the acceleration in $x$-, $y$-, and $z$-axes. We select ``Walking'' as the nominal pre-change state and our goal is to detect a change to the ``Jogging'' state (post-change) as quickly as possible. 

We mainly compare the proposed DR-CuSum with the NGLR-CuSum test. For the NGLR-CuSum, we first fit a Gaussian distribution as the pre-change distribution using available pre-change samples from several users whose pre-change sample size is large. Then we apply the test in \eqref{eq:def_nglr_with_sample} with product kernel \eqref{eq:def_product_kernel}.
For the DR-CuSum test, following  Remark~\ref{rmk:emp_pre}, we directly solve for the LFD $P^*$ using the pre-change samples without estimating the pre-change density. We first visualize the trajectory of the DR-CuSum detection statistics for a particular user. Then we provide the comparison of average detection delay (over multiple users) at the end of this subsection.

\begin{figure}[htbp!]
    \centering
    \begin{tabular}{cc}
      \includegraphics[width=0.45\textwidth]{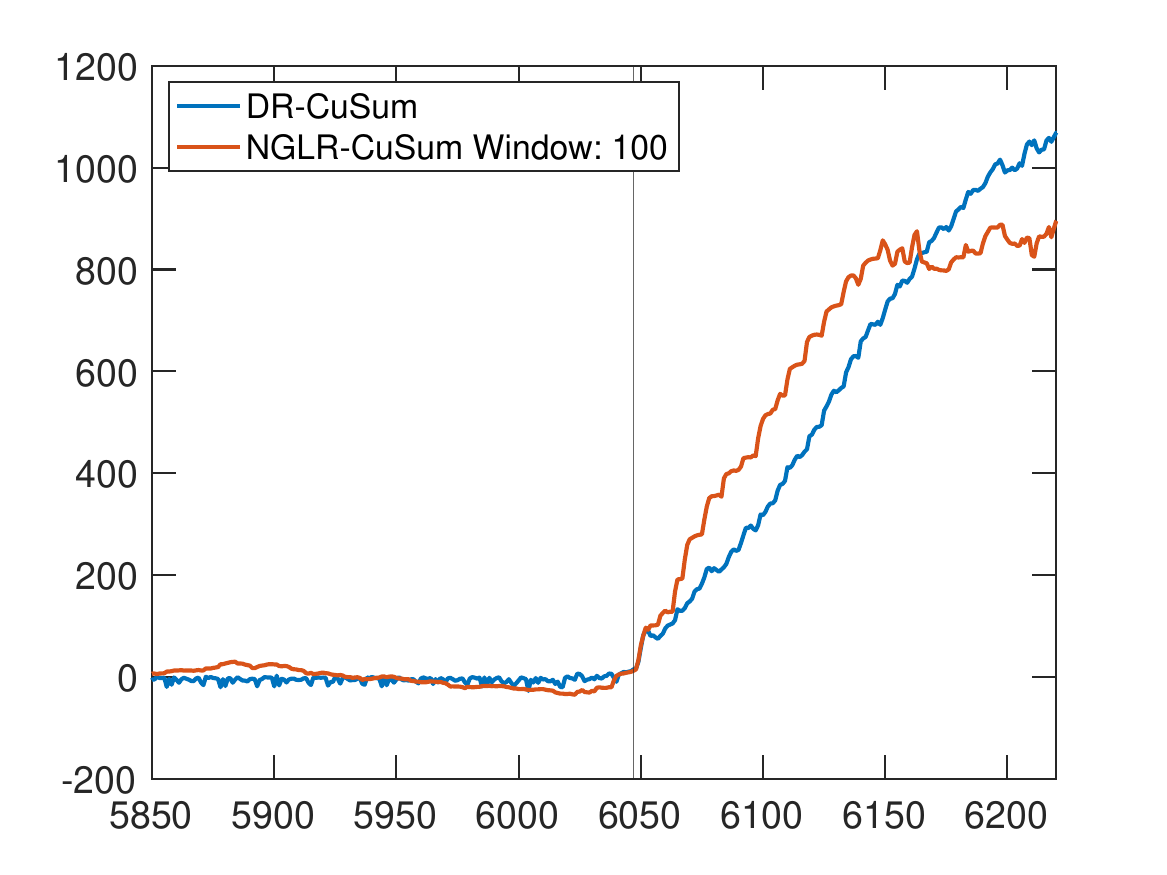}   &  \includegraphics[width=0.45\textwidth]{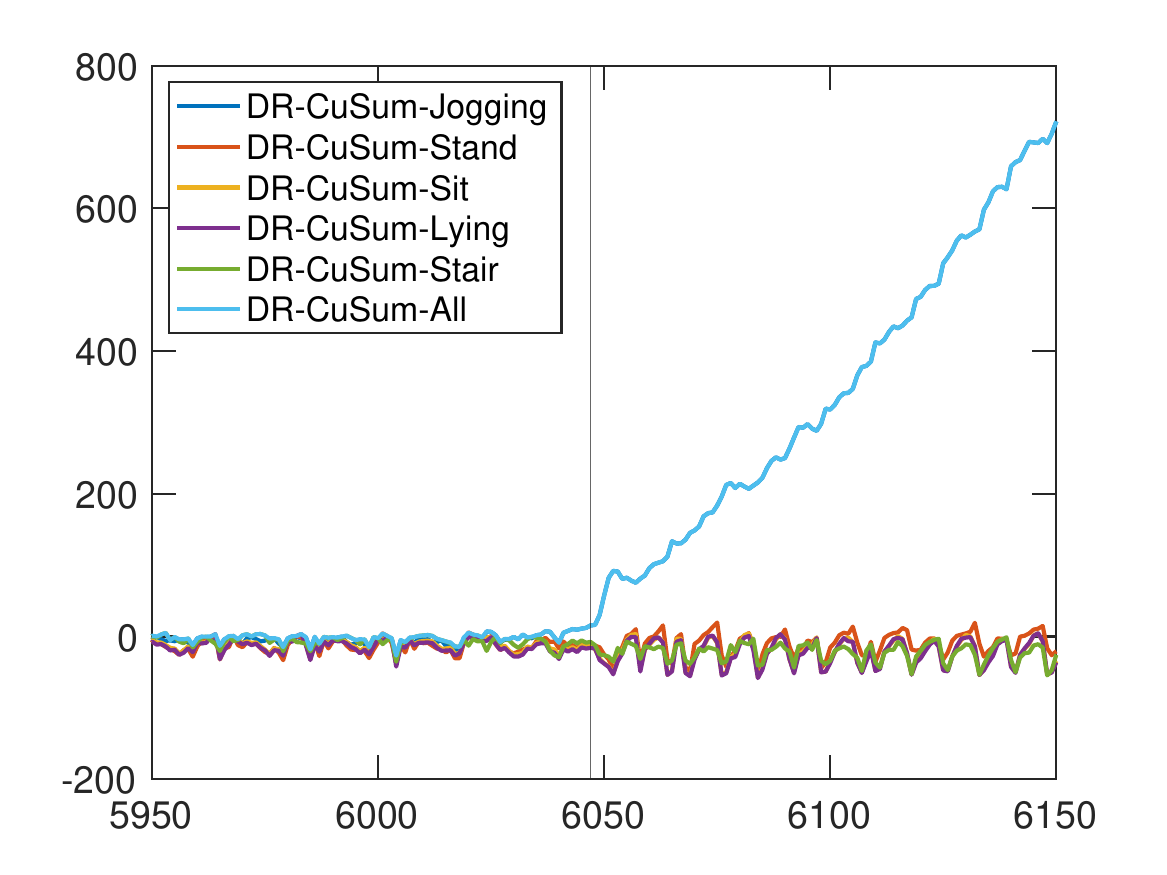}  
\end{tabular}
    \caption{Activity monitoring using DR-CuSum test, where the goal is to make an alarm when the state changes from Walking to Jogging. The pre-change distribution for NGLR-CuSum test is assumed to be Gaussian and estimated using historical data. For each of the Jogging, Stairs, Sitting, Standing, and LyingDown states, empirical samples are taken from one user's data to construct all three tests. Monitoring is performed on another user's activity. Number of empirical samples from each scenario $n=5$. Wasserstein order $s=1$. Radius $r=8$. Number of scenarios $M=5$ (right plot only). The vertical line represents the true change-point.}
    \label{fig:activity_CuSum_5}
\end{figure}

In the first scenario, we consider $M=1$ with ``Jogging'' being the true post-change activity. We select $n=5$ samples from the Jogging state of a specific user to construct the empirical distribution $\widehat P$. This represents the case where number of training samples available is very small. The DR-CuSum test, along with the baseline NGLR-CuSum test, is applied on \textit{another} user's data to monitor his/her activity change from Walking to Jogging. We emphasize that here the choice of empirical samples can be flexible, as we did not require post-change samples to come from the target user whose activity is being detected, but we instead use empirical samples obtained from a {\it different} user to construct the uncertainty set. This is reasonable in the activity detection example as the entire dataset has 225 users in total. Moreover, this also demonstrates the importance of constructing an uncertainty set for detection since the distribution of Jogging among different users can have some minor differences. Fig.~\ref{fig:activity_CuSum_5} (Left) shows an example where there is a clearer dichotomy for the DR-CuSum statistic before and after the change, and the DR-CuSum statistics is more stable under the pre-change regime. 

In the second scenario, we consider $M=5$ with the post-change activity being one of the Jogging, Stairs, Sitting, Standing, and LyingDown state. We construct the $M$ uncertainty sets using $n=5$ empirical samples from each state and solve for LFDs $P_1^*,\ldots,P_M^*$. The resulting DR-CuSum detection statistics in Fig.~\ref{fig:activity_CuSum_5} (Right) 
show that the maximum statistic in \eqref{eq:CuSum_stop_multi} is helpful not only for change detection, but also for change isolation.



We also compare the average detection delay of DR-CuSum test with that of the NGLR-CuSum test. We focus on the detection of activity change from Walking to Jogging and select 86 user sequences that contain such activity change. For each of these 86 sequences, we use $n$ empirical data randomly selected from the post-change state to construct the uncertainty set for the DR-CuSum test, and for density estimation in \eqref{eq:def_nglr_with_sample}. We repeat such procedures ten times for each user to account for the randomness in selecting the post-change empirical samples. The average detection delay and standard deviation are reported in Table~\ref{tab:ADD}. We can see that the DR-CuSum test tends to have a smaller detection delay and a smaller variability compared with NGLR-CuSum. 

\begin{table}[htbp!]
    \centering
    \caption{Average detection delay of DR-CuSum and NGLR-CuSum test on 86 user sequences for detecting the change from Walking to Jogging. The bandwidth is selected using the rule of thumb as $h_i=W^{-1/(d+4)}\hat\sigma_i$, where $W=100$ is the window size, $d=3$ is the data dimension, and $\hat\sigma_i$ is the estimated standard deviation from pre-change data. The window for NGLR-CuSum is set as $W=100$. The threshold is chosen as the upper 1\% quantile of the detection statistics for the pre-change samples. The experiments are repeated ten times and the average detection delay is reported, with standard deviations in parentheses.}
    \begin{tabular}{lcc}
    \hline 
         & DR-CuSum & NGLR-CuSum \\
         \hline
    $n=10$, $r=2$  & 25.61 (4.08) & 81.26 (0.51)   \\
    $n=20$, $r=1.5$ & 16.88 (2.74)   &   74.77 (13.62)  \\
    \hline
    \end{tabular}
    \label{tab:ADD}
\end{table}

\section{Conclusion}\label{sec:conclusion}
We developed an asymptotically minimax robust procedure for QCD, which we refer to as the distributionally robust (DR) CuSum test, in the setting where the pre-change distribution is known, and where the post-change distribution belongs to a union of data-driven Wasserstein uncertainty sets. We showed that the density corresponding to the post-change least favorable distribution within the Wasserstein uncertainty set is an exponential tilting of the known pre-change density. We characterized the minimum sufficient radius for the uncertainty set through concentration inequalities for empirical measures in Wasserstein distance. For an example with Gaussian observations, we compared through Monte-Carlo simulations, the performance of the DR-CuSum test with that of the CuSum test that has knowledge of the Gaussian post-change model and uses the maximum likelihood estimate (MLE) of the post-change mean and variance from the labeled data. We showed that with an appropriately chosen radius, the DR-CuSum test, which makes no distributional assumptions about the post-change, outperforms the Gaussian MLE CuSum test. 
We also numerically compared the performance of the DR-CuSum test with that of the modified NGLR-CuSum test \citep{liang2023quickest}, and found that with 3-d observations the DR-CuSum test outperforms NGLR-CuSum in terms of both computational complexity and delay performance. 
We also demonstrated the performance of the DR-CuSum test on a real human activity detection dataset.
Our theoretical findings can be extended to the non-stationary setting where the post-change observations are independent but not necessarily identically distributed using \cite[Theorem II.3]{liang2021non}; we leave this extension for future research. 

\section*{Funding Acknowledgement}
The work of Liyan Xie was partially supported by UDF01002142 and 2023SC0019 through the Chinese University of Hong Kong, Shenzhen. The work of Yuchen Liang and Venugopal V. Veeravalli was supported by the U.S. National Science Foundation under grant  ECCS-2033900, and by the U.S. Army Research Laboratory under Cooperative Agreement W911NF-17-2-0196, through the University of Illinois at Urbana-Champaign.

\bibliographystyle{tfcad}
\bibliography{ref}


\appendix

\section*{Appendix A: A Useful Lemma for Proof of Theorem\,\ref{thm:lfd}}
\refstepcounter{section}\label{sec:proof_main}

We present the following lemma which is used in the proof of Theorem\,\ref{thm:lfd}. 
\begin{customlem}{A.1}\label{lem:opt_lem}
    Given constants $c_1,c_2,\ldots,c_n\in\mathbb{R}$, consider the following optimization problem:
    \begin{equation}
        \label{eq:lem5-obj}
     \min_{a_1,a_2,\ldots,a_n\geq0} f(a_1,a_2,\ldots,a_n):= \Big(\sum_{i=1}^n a_i\Big) \log\Big(\sum_{i=1}^n a_i\Big) + \sum_{i=1}^n c_i a_i,
    \end{equation}
    where by convention, we let $0\log 0=0$. Then, the minimizer $(a_1^*,\ldots,a_n^*)$ satisfies
    \[
    \sum_{i=1}^n a_i^*=e^{-\min_{i=1,\ldots,n} c_i - 1},
    \]
    and the optimal objective value is $f(a_1^*,a_2^*,\ldots,a_n^*) = - e^{-\min_{i=1,\ldots,n} c_i - 1}$.
\end{customlem}
\begin{proof}
    We first note that it suffices to consider the case where all the $c_i$'s are distinct, i.e., $c_i\neq c_j$, $\forall i\neq j$. This is due to the fact that if any pair of $c_i$ and $c_j$ are equal, we can re-define our set of constants to be the unique values in $\{c_1,\ldots,c_n\}$, let's denote these as $\{c_1',\ldots,c_k'\}$ ($k<n$). We then define new variables $\Tilde{a}_{i} = \sum_{j:c_j=c_i'} a_j$, $i=1,\ldots,k$. The problem \eqref{eq:lem5-obj} thus becomes equivalent to:
    \[
    \min_{\Tilde{a}_{1},\ldots,\Tilde{a}_{k}\geq0} f(\Tilde{a}_{1},\ldots,\Tilde{a}_{k}):= (\sum_{i=1}^k \Tilde{a}_{i}) \log(\sum_{i=1}^k \Tilde{a}_{i}) + \sum_{i=1}^k c_i' \Tilde{a}_{i},
    \]
    with distinct $c_i'$ values, and this new problem has the same optimal value as the original problem in \eqref{eq:lem5-obj}. Therefore, we can safely assume that $c_1,\ldots,c_n$ are different from each other for the remaining proof, i.e., $c_i\neq c_j$, $\forall i\neq j$.
    
    We introduce Lagrangian multipliers $\lambda_i\geq 0$ for $i=1,\ldots,n$ for the constraints. The corresponding Lagrangian function is then given by
    \[    L(a_1,\ldots,a_n,\lambda_1,\ldots,\lambda_n) = \Big(\sum_{i=1}^n a_i\Big) \log\Big(\sum_{i=1}^n a_i\Big) + \sum_{i=1}^n c_i a_i - \sum_{i=1}^n \lambda_i a_i.
    \]
    By applying the Karush–Kuhn–Tucker condition, the optimal solution $(a_1^*,\ldots,a_n^*)$ must satisfy the gradient condition
    \begin{equation}
        \label{eq:lem5-eq1}
        \frac{\partial L}{\partial a_i^*} = 1 + \log\Big(\sum_{i=1}^n a_i^*\Big) + c_i - \lambda_i=0,  \ \forall i=1,2,\ldots,n,
    \end{equation}
    and the complementary slackness conditions
    \begin{equation}
        \label{eq:lem5-eq2}
    \lambda_i a_i^* = 0, \ \forall i=1,2,\ldots,n.
    \end{equation}    
    From \eqref{eq:lem5-eq1}, we deduce that  
    \[
    \lambda_i = 1 + \log\Big(\sum_{i=1}^n a_i^*\Big) + c_i, \ \forall i=1,2,\ldots,n,
    \]
    which implies that $\lambda_1,\ldots,\lambda_n$ are distinct. Now, we consider two scenarios: 
    
    \noindent (i) If $\lambda_i\neq 0$, $\forall i$, then from \eqref{eq:lem5-eq2} we get $a_1^*=a_2^*=\cdots=a_n^*=0$ and the objective value is zero.
    
    \noindent (ii) If there exists $i_0$ such that $\lambda_{i_0}=0$, then from $\lambda_i\geq 0$, $\forall i$, we have that 
    \[
    i_0=\argmin \lambda_i = \argmin c_i.
    \]
    Additionally, by \eqref{eq:lem5-eq2}, we have $a_j^*=0$ for $j\neq i_0$ since $\lambda_j \neq \lambda_{i_0}=0$, and the corresponding $a_{i_0}^*=e^{-c_{i_0}-1}$ from \eqref{eq:lem5-eq1}, yielding an objective value of $-e^{-c_{i_0}-1}<0$. 
    
    Therefore, when $c_1,\ldots,c_n$ are distinct and $i_0=\argmin c_i$, the minimizer is given by $a_{i_0}^*=e^{-c_{i_0}-1}$ and $a_{j}^*=0$, $\forall j\neq i_0$, and the optimal value is $-e^{-c_{i_0}-1}$.  
    In summary, the optimal solution $(a_1^*,\ldots,a_n^*)$ to \eqref{eq:lem5-obj} satisfies $\sum_{i=1}^n a_i^*=e^{-\min_{i=1,\ldots,n} c_i - 1}$, and the corresponding optimal objective value is $-e^{-\min_{i=1,\ldots,n} c_i - 1}$. 
\end{proof}

\section*{Appendix B: More about the Transportation-Cost Inequality}
\refstepcounter{section}\label{sec:app_T1T2}

In this section, we present example distributions that satisfy such a Transportation-Cost Inequality in Definition\,\ref{def:T_ineq}, to demonstrate the wide applicability of the results in Section\,\ref{sec:radius_select}. 

\paragraph{Examples of $T_1(c)$ inequality:}  For a discrete sample space with the Hamming metric $c(x,y) = 1_{\{x \neq y\}}$, the $W_1$ distance satisfies the following inequality 
\[
\wass_1(P,Q) = \| P - Q \|_{\mathrm{TV}}\leq \sqrt{\frac12 \mathsf{KL}(Q||P)},
\]
which is a consequence of Pinsker's inequality. Hence, the $T_1(1/4)$ inequality holds for every probability measure on the discrete sample space.

\paragraph{Examples of $T_2(c)$ inequality:} For $\mathcal X=\mathbb R^n$ and $c(x,y)=\| x-y\|_2$, the standard $n$-dimensional Gaussian distribution satisfies the $T_2(1)$ inequality, i.e., $\wass_2(P,Q) \leq \sqrt{2 \mathsf{KL}(Q||P)}$ for $P$ being the $n$-dimensional Gaussian distribution and $Q$ being any distribution satisfying $Q \ll P$. More generally, if $P=N(\mu,\Sigma)$, where $\mu$ is the mean vector and $\Sigma$ is the covariance matrix, then $P$ satisfies the $T_2(c)$ inequality for $c=1/2\kappa$, where $\kappa\leq \min_i \lambda_i(\Sigma^{-1})$, representing the smallest eigenvalue of the inverse covariance matrix.

\section*{Appendix C: Proofs for Section\,\ref{sec:multiple_pcs}}
\refstepcounter{section}\label{sec:app_proof5}

\begin{proof}[Proof of Lemma\,\ref{lemma:fa}]
  We define the following quantity
\[ Z_j^{(m)} := \log \frac{p_{(m)}^\ast(X_j)}{q(X_j)}. \]
First, we can assume that $\bE_\infty [\tau_{\mathrm{DR}}(b)] < \infty$, as otherwise the statement would be trivial. Consider the following test based on the Shiryaev-Roberts (SR) statistic,
\[ 
\tau_b^R := \inf\cbrc{k \in \bN: \sum_{m=1}^M \sum_{n=1}^k \prod_{j=n}^k e^{Z^{(m)}_j} =: \sum_{m=1}^M R^{(m)}_k \geq e^b}. 
\]
Note that $\tau_b^R \leq \tau_{\mathrm{DR}}(b)$ since $\sum_{m=1}^M R^{(m)}_k \geq \max_{m=1,\ldots,M} S_k^{(m)}$. Hence, $\bE_\infty \sbrc{\tau^R_b} < \infty$. Denoting $R_k := \sum_{m=1}^M R^{(m)}_k$, we have
\begin{align*}
    \bE_\infty \sbrc{R_k | \mathcal{F}_{k-1}} &= \sum_{m=1}^M \bE_\infty \sbrc{(1+R^{(m)}_{k-1}) e^{Z^{(m)}_k} | \mathcal{F}_{k-1}}\\
    &= \sum_{m=1}^M (1+R^{(m)}_{k-1}) = M + R_{k-1},
\end{align*}
which implies that the sequence $\{ (R_k - M k) \}_{k \geq 1}$ forms a martingale.
Furthermore, since $R_k \in (0,e^b)$ almost surely on the event $\{\tau^R_b > k\}$, we have for any $k \geq 1$,
\begin{align*}
    \bE_\infty \sbrc{\big|(R_{k+1} - M (k+1)) - (R_k - M k)\big|\Big|\mathcal{F}_{k}} 
    & = \bE_\infty \sbrc{\big|R_{k+1} - R_k - M\big|\Big|\mathcal{F}_{k}}\\
    &\leq \bE_\infty \sbrc{R_{k+1}|\mathcal{F}_{k}} + (R_k + M)\\
    &= 2 (R_k + M)\\
    &\leq 2 (e^b + M)
\end{align*}
almost surely on the event $\{\tau^R_b > k\}$. Therefore, by the Optional Stopping Theorem,
\[
\bE_\infty [R_{\tau_b^R}] = M \bE_\infty [\tau_b^R]. 
\]
Since $R_{\tau_b^R}\geq e^b$, it follows that $\bE_\infty [\tau_b^R] \geq M^{-1} e^b$ and consequently 
\[
\bE_\infty [\tau_{\mathrm{DR}}(b)]\geq \bE_\infty [\tau_b^R]\geq M^{-1} e^b.
\]
\end{proof}


\begin{proof}[Proof to Theorem\,\ref{thm:opt}]
    Let $\calP_m := \calP_{n_m}^{(m)}$. Recall that $P_{(m)}^*$ represents the LFD for the $m$-th class and that $p_{(m)}^*$ denotes its density with respect to the dominating measure $\mu$. For the lower bound, we have
\begin{align*}
  \inf_{\tau' \in C(\gamma)} \max_{i=1,\ldots,M} \sup_{P_i\in\calP_{i}} \WADD^{P_i}(\tau') 
 & \geq \max_{i=1,\ldots,M} \sup_{P_i\in\calP_{i}} \inf_{\tau' \in C(\gamma)} \WADD^{P_i}(\tau') \\
 & \geq \frac{\log\gamma}{I^*} (1 + o(1)),    
\end{align*}
where the last inequality follows from \cite[Thm.~1]{lai-ieeetit-1998} and the weak law of large numbers for independent random variables.

For the upper bound, we consider the detection rule $\tau_{\mathrm{DR}}(b)$ defined in \eqref{eq:CuSum_stop_multi}. For any distribution $P_i \in \mathcal P_i$ we have 
\[
\begin{aligned}
& \mathbb E^{P_i}\sbrc{\log \frac{p_{(i)}^*(X)}{q(X)}} 
= \mathsf{KL}(P_i||Q) - \mathsf{KL}(P_i||P_{(i)}^*) \geq \mathsf{KL}(P_{(i)}^*||Q),
\end{aligned}
\]
where the last inequality is a consequence of the weak stochastic boundedness condition. Therefore, according to \cite[Thm.~4(ii)]{lai-ieeetit-1998}, we have
\[
\max_{i=1,\ldots,M}\sup_{P_i\in\calP_{i}}\WADD^{P_i}(\tau_{\mathrm{DR}}(b)) \leq \frac{b}{I^*} (1 + o(1)). 
\]
By selecting $b = b_\gamma = \log \gamma + \log M$, we obtain
\begin{align*}
\max_{i=1,\ldots,M}\sup_{P_i\in\calP_i}\WADD^{P_i}[\tau_{\mathrm{DR}}(b_\gamma)] &\leq \frac{\log \gamma + \log M}{I^*} (1 + o(1))\\
&= \frac{ \log \gamma}{I^*} (1 + o(1)).
\end{align*}
Finally, from Lemma~\ref{lemma:fa}, the proof is complete since $\tau_{\mathrm{DR}}(b_\gamma) \in C(\gamma)$ when $b = b_\gamma$.
\end{proof}

\section*{Appendix D: More Numerical Results --  Effect of Radius}
\refstepcounter{section}\label{sec:numerical_appendix}

In this section, we investigate the influence of the radius of the uncertainty sets. We simplify the analysis by considering the case where $M=1$. We examine two distribution models: Gaussian and Beta. In both scenarios, we collect a set of 10 samples as training data for the post-change distribution.

\begin{figure}[htbp!]
    \centering
    \begin{tabular}{cc}
\includegraphics[width=0.4\linewidth]{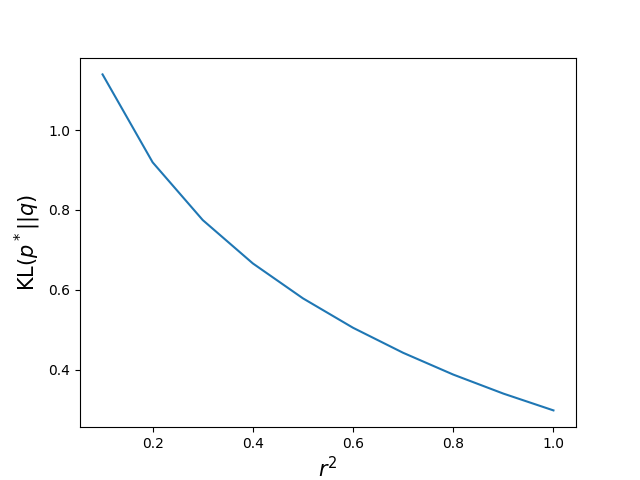} & \includegraphics[width=0.4\linewidth]{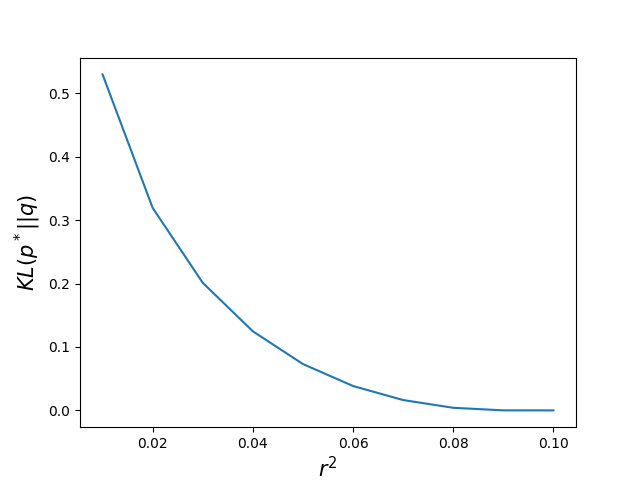}
    \end{tabular}
      \caption{KL-divergence as a function of radius. In the Gaussian example (left), we consider the pre-change distribution as ${\cal N}(0,1)$ and the post-change distribution as ${\cal N}(1,1)$. In the Beta example (right), we consider the pre-change distribution as Beta(2, 3) and the post-change distribution as Beta(3, 
 2). Both examples are evaluated with a Wasserstein order $s=2$.}
    \label{fig:kldiv_r}
\end{figure}

In Fig.~\ref{fig:kldiv_r}, it is evident that the smallest KL divergence over the post-change uncertainty set, which represents the divergence between the post-change LFD and the pre-change distribution, decreases as the radius increases. When the radius $r$ surpasses a certain threshold, this smallest KL divergence becomes zero, indicating that the post-change uncertainty set encompasses the pre-change distribution. In practical applications, since this smallest KL divergence can be pre-computed using the pre-change distribution and the post-change training samples, one can choose the radius based on a desired threshold for this value. Specifically, suppose we are only interested in detecting changes from $Q$ to $P$ with a minimum KL divergence of $\mathsf{KL}(P||Q) \geq I^\text{min}$, where $I^\text{min}>0$. In that case, the radius can be selected such that the KL divergence between the post-change LFD and the pre-change distribution is greater than $I^\text{min}$.

\end{document}